\newtheorem{theorem}{Theorem}[section]
\newtheorem{proposition}[theorem]{Proposition}
\newtheorem{corollary}[theorem]{Corollary}
\newtheorem{lemma}[theorem]{Lemma}
\theoremstyle{definition}
\newtheorem{definition}[theorem]{Definition}
\newtheorem{question}[theorem]{Question}
\newtheorem{remark}[theorem]{Remark}
\newtheorem{notation}[theorem]{Notation}
\numberwithin{equation}{section}
\newcommand{\PP}{\mathbb{P}}
\newcommand{\CC}{\mathbb{C}}
\newcommand{\cO}{\mathcal{O} }
\newcommand{\cC}{\mathcal{C} }
\newcommand{\cH}{\mathcal{H} }
\newcommand{\cM}{\mathcal{M} }
\newcommand{\cS}{\mathcal{S} }
\newcommand{\cV}{\mathcal{V} }
\newcommand{\cU}{\mathcal{U} }
\newcommand{\cZ}{\mathcal{Z} }
\newcommand{\rE}{\mathrm{E} }
\newcommand{\rIE}{\mathrm{IE} }
\newcommand{\rH}{\mathrm{H} }
\newcommand{\rP}{\mathrm{P} }
\newcommand{\rIP}{\mathrm{IP} }
\newcommand{\bH}{\mathbf{H} }
\newcommand{\bM}{\mathbf{M} }
\newcommand{\bS}{\mathbf{S} }
\def\Hom{\mathrm{Hom} }
\def\Ext{\mathrm{Ext} }
\def\Gr{\mathrm{Gr} }
\def\git{/\!/ }
\def\lr{\rightarrow}
\newcommand{\ses}[3]{0\rightarrow{#1}\rightarrow{#2}\rightarrow{#3}\rightarrow0}
\newcommand{\tcite}[2]{\protect{\cite[{#1}]{#2}}}
\def\mapright#1{\,\smash{\mathop{\longrightarrow}\limits^{#1}}\,}
\providecommand{\leftsquigarrow}{%
  \mathrel{\mathpalette\reflect@squig\relax}%
}
\newcommand{\reflect@squig}[2]{%
  \reflectbox{$\m@th#1\rightsquigarrow$}%
}
\begin{document}

\title[Desingularization of twisted cubics on $V_5$]{Desingularization of Kontsevich's compactification of twisted cubics in $V_5$}
\author{Kiryong Chung}
\address{Department of Mathematics Education, Kyungpook National University, 80 Daehakro, Bukgu, Daegu 41566, Korea}
\email{krchung@knu.ac.kr}

\date{}
\keywords{Rational curves, Compactification, Desingularization, Intersection cohomology}
\subjclass[2010]{14E15, 14E05, 14M15, 32S60.}
\begin{abstract}
By definition, the del Pezzo $3$-fold $V_5$ is the intersection of $\mathrm{Gr}(2,5)$ with three hyperplanes in $\mathbb{P}^9$ under Pl\"ucker embedding, and rational curves in $V_5$ have been examined in various studies on Fano geometry. In this paper, we propose an explicit birational relation for the Kontsevich and Simpson compactifications of twisted cubic curves in $V_5$. As a direct corollary, we obtain a desingularized model of Kontsevich compactification that induces the intersection cohomology group of Kontsevich's space.
\end{abstract}
\maketitle
\section{Introduction}\label{sec:intro}
\subsection{Compactified moduli spaces of rational curves} Let $X\subset \PP^r$ be a smooth projective variety with a fixed polarization $\cO_X(1)$. The compactified moduli spaces of interest to us are the following:
\begin{itemize}
\item \emph{Kontsevich space}:
Let $C$ be a projective connected reduced curve. A map $f: C \to X$ is considered \emph{stable} if $C$ has at worst nodal singularities and $|\mathrm{Aut}(f)|<\infty$.
Let $\cM(X,d)$ be the moduli space of isomorphism classes of stable
maps $f:C\to X$ with genus $g(C)=0$ and $\mathrm{deg} f^*\cO_X(1)=d$.
\item \emph{Simpson space}:
For a coherent sheaf $F\in \mathrm{Coh}(X)$, the Hilbert polynomial $P(F)(m)$ is defined by $\chi(F\otimes \cO_X(m))$. If the support of $F$ has dimension $n$, $P(F)(m)$ has degree $n$ and it can be written as
\[
	P(F)(m) = \sum_{i=0}^{n}a_{i}\frac{m^{i}}{i!}.
\]
The coefficient $a_{n}=d(F)$ is called the \emph{multiplicity} of $F$. The \emph{reduced Hilbert polynomial} of $F$ is defined by $p(F)(m)= P(F)(m)/d(F)$. A pure sheaf $F$\footnote{i.e., the dimension of the non-zero subsheaf of $F$ is the same as that of $F$} is \emph{stable} if for every nonzero proper subsheaf $F' \subset F$,
\[p(F')(m) < p(F)(m) \; \mathrm{for}\;\mathrm{all} \;m\gg 0.\]
Let $\cS(X,d)$ be the moduli space of isomorphism classes of stable sheaves with Hilbert polynomial $dm+1$.
\item \emph{Hilbert scheme}: Let $\cH(X,d)$ be the Hilbert scheme of
ideal sheaves $I_C$ of a curve $C$ in $X$ with Hilbert polynomial $\chi(\cO_C(m))=dm+1$.
\end{itemize}
The compactness of these moduli spaces is well known (\cite{FGetal05, FP97, HuLe10}) and their geometric properties (of small degree $d$) have been studied in various contents (\cite{EH82, PS85, FT04, Kie07, KM10, CK11, Che08b, CC11, CHK12}).

Let $R(X,d)$ be the quasi-projective variety parameterizing smooth rational curves of degree $d$ in $X$.
Let us denote by $\bM(X,d)$ (resp. $\bS(X,d)$, resp. $\bH(X,d)$) the closure of $R(X,d)$ in $\cM(X,d)$ (resp. $\cS(X,d)$,
resp. $\cH(X,d)$) and define it as the \emph{Kontsevich} (resp. \emph{Simpson}, resp. \emph{Hilbert}) \emph{compactification}.

Previously, the compactifications $\bM(\PP^r,d)$, $\bS(\PP^r,d)$, and $\bH(\PP^r,d)$ were compared using birational morphisms for the lower degree cases $d\leq 3$ (\cite{Kie07} and \cite{CK11}). They proved that these spaces are related by several blow-ups/-downs with geometric meaningful centers (See Section \ref{sec:2.1} for detail). The key technique employed for the comparison was to use the elementary modification of sheaves and the variation of geometric invariant theory (VGIT) quotients (\cite{HuLe10, Tha96}). Chung et al. subsequently generalized the aforementioned results \cite{CK11, Kie07} when the projective variety $X$ satisfies some suitable conditions (i.e., the condition (I)-(IV) in Section \ref{sec:2.1}).
For instance, all conditions are satisfied when $X$ is a projective homogeneous variety.
In this paper, we improve upon this by comparing compactified moduli spaces even when the del Pezzo $3$-fold $V_5$ does not satisfy the conditions.
\subsection{Results}
We study the case of the del Pezzo $3$-fold $V_5=\Gr(2,5)\cap \PP^6$, which is uniquely determined by a general linear section $\PP^6$ of the Grassmannian variety $\mathrm{Gr}(2,5)\subset \PP^9$ under the Pl\"ucker embedding. Many algebraic geometers have studied the Hilbert scheme of rational curves in $V_5$ of lower degrees from different perspectives (\cite{FN89, ILi94, LLSS17,San14, KPS18, CHL18}).
\begin{proposition}[\protect{\cite[Theorem 1]{FN89}, \cite[Proposition 1.2.2]{ILi94}, and \cite[Proposition 2.46]{San14}}]
The Hilbert scheme of rational curves of degree $d$ in $V_5$ is isomorphic to
\begin{equation*}
\cH(V_5,d)=
    \begin{cases*}
       \PP^2 & if $d=1$, \\
      \PP^4 & if $d=2$, \\
      \mathrm{Gr}(2,5) & if $d=3$.
    \end{cases*}
  \end{equation*}
\end{proposition}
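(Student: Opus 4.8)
The three cases are most naturally handled together, exploiting the $\mathrm{SL}_2$-equivariant model of $V_5$. Writing $U=\CC^2$ for the standard representation and $V=\Sym^4 U\cong\CC^5$, the Plücker space decomposes as $\wedge^2 V\cong \Sym^6 U\oplus\Sym^2 U$, and one checks that $V_5=\Gr(2,V)\cap\PP(\Sym^6 U)$, the three cutting hyperplanes being exactly the complementary summand $\Sym^2 U\cong\CC^3$. Hence $\mathrm{PGL}_2\hookrightarrow\mathrm{Aut}(V_5)$ acts on every $\cH(V_5,d)$, and the first thing I would do is recognize the three targets equivariantly as $\PP^2=\PP(\Sym^2 U)$, $\PP^4=\PP(\Sym^4 U)$ and $\Gr(2,5)=\Gr(2,\Sym^4 U)$. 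This reframing makes the expected isomorphisms $\mathrm{PGL}_2$-equivariant, which both suggests the maps to build and sharply constrains them.

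Step one is deformation theory. For a smooth rational curve $C\subset V_5$ with $H\cdot C=d$, the normal bundle $N_{C/V_5}$ is a rank-two bundle on $\PP^1$ of degree $-K_{V_5}\cdot C-2=2d-2$, so $\chi(N_{C/V_5})=2d$. I would determine its splitting type and prove $H^1(C,N_{C/V_5})=0$ in all three cases; concretely this amounts to ruling out a sub-line-bundle summand $\cO(k)$ with $k\le -2$, which one controls via the sequence $0\to N_{C/V_5}\to N_{C/\PP^6}\to N_{V_5/\PP^6}|_C\to 0$ together with the fact that $V_5$ is cut out in $\PP^6$ by quadrics. This gives that $\cH(V_5,d)$ is smooth of dimension exactly $2d$ along the locus $R(V_5,d)$ of smooth rational curves --- note $2d=2,4,6$ matches $\dim\PP^2,\dim\PP^4,\dim\Gr(2,5)$ --- and, after verifying that the Hilbert scheme has no components beyond the closure of $R(V_5,d)$ and remains smooth at the degenerate members, that $\cH(V_5,d)$ is smooth, irreducible and projective of dimension $2d$.

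The heart of the argument is then the explicit identification. Restricting the tautological subbundle gives $0\to \mathcal{S}|_C\to V\otimes\cO_C\to \mathcal{Q}|_C\to 0$, with $\mathcal{S}|_C$ of rank two and degree $-d$, and the two natural covariants of a curve $C\subset\Gr(2,V)$ are the swept subspace $\sum_{t}U_t\subset V$ and the axis $\bigcap_t U_t\subset V$. For $d=2$ I would send a conic to $\sum_t U_t$, generically a hyperplane of $V$, yielding a $\mathrm{PGL}_2$-equivariant morphism $\cH(V_5,2)\to\Gr(4,V)\cong\PP(\Sym^4 U)$; for $d=3$ I would use the maximal destabilizing sub-line-bundle of $\mathcal{S}|_C$ (the general splitting type being $\cO(-1)\oplus\cO(-2)$), which defines a degree-one map $C\to\PP(V)$ whose image is a line, i.e. a canonical $2$-plane in $V$, hence a morphism $\cH(V_5,3)\to\Gr(2,\Sym^4 U)$; and for $d=1$ a line with flag $A\subset B$ maps to the associated covariant in $\Sym^2 U$. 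In each case, since both source and target are smooth projective of the same dimension, it suffices to prove bijectivity --- equivalently to exhibit the inverse reconstructing the curve from its covariant --- whereupon a bijective morphism between smooth varieties in characteristic zero is an isomorphism by Zariski's main theorem.

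The main obstacle, I expect, is precisely this identification, and specifically its bijectivity rather than any cohomology vanishing. Producing the equivariant map is guided by representation theory, but proving it injective with the correct image requires a genuine analysis of how rational curves of each degree sit in $\Gr(2,V)$: one must classify the possible splitting types of $\mathcal{S}|_C$, track how $\sum_t U_t$ and $\bigcap_t U_t$ degenerate over the special $\mathrm{PGL}_2$-orbits (the discriminant conic inside $\PP(\Sym^2 U)$; the conics and cubics along which $\mathcal{S}|_C$ fails to be balanced and the curve acquires a common point, so that the $2$-plane construction for $d=3$ must be taken in a limiting sense), and verify that the reconstruction of $C$ from its covariant stays well-defined over these boundary strata. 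Checking that the resulting map is defined on the whole Hilbert scheme, and not merely on $R(V_5,d)$, is where the real work lies.
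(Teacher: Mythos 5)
The paper offers no proof of this proposition --- it is quoted verbatim from \cite[Theorem 1]{FN89}, \cite[Proposition 1.2.2]{ILi94} and \cite[Proposition 2.46]{San14} --- so the only meaningful comparison is with those references, and by that measure your proposal is not an alternative route but a reconstruction of theirs: the Mukai--Umemura model $V_5=\Gr(2,\Sym^4 U)\cap\PP(\Sym^6 U)$, the $\mathrm{PGL}_2$-equivariant covariant maps, and bijectivity plus Zariski's main theorem are exactly how the cited authors proceed. Concretely, your $d=3$ construction (the maximal sub-line-bundle $\cO(-1)\subset\cU|_C$ traces out a line $L\subset\PP(V)$) is precisely the inverse of the correspondence $[L]\mapsto\sigma_{2,0}(L)\cap\PP^6$ of \cite[Remark 2.47]{San14}, which the present paper recalls and uses in Section \ref{sec:3.1}; and your $d=2$ map is encoded in the resolution $\ses{\cO_{V_5}(-1)}{\cU|_{V_5}}{I_C}$ of Proposition \ref{linesinv}, which exhibits every conic as $V_5\cap\Gr(2,W)$ for a unique hyperplane $W\subset V$, i.e.\ a point of $\PP(V^*)\cong\PP(\Sym^4 U)$. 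Your numerical checks are also correct: $\deg N_{C/V_5}=-K_{V_5}\cdot C-2=2d-2$ and $\chi(N_{C/V_5})=2d$, matching $\dim \PP^2, \PP^4, \Gr(2,5)$.

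The caveat is that what you have written is a plan whose genuinely hard steps are flagged rather than carried out, and those steps are the substance of the cited proofs: (i) that $\cH(V_5,d)$ is irreducible and smooth at the non-reduced members (planar double lines for $d=2$; triple lines on quadric cones and curves supported on pairs of lines for $d=3$), and (ii) that the covariant maps extend, bijectively, over these strata --- for instance, for a double line the swept space $\sum_t U_t$ drops to dimension $3$, so your $d=2$ morphism is a priori undefined there and must be produced as a limit (its value being a point of the rational normal quartic curve of Proposition \ref{double}). So: right approach, correct skeleton, but not yet a proof; the boundary analysis you defer is exactly what \cite{FN89, ILi94, San14} supply.
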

In this study, we compare three moduli spaces $\cM(V_5,3)$, $\cS(V_5,3)$, and $\cH(V_5,3)$ for the case $d=3$ (i.e., twisted cubic curves).
Because the defining equation of $V_5$ is generated by degree $\leq 2$, it can be verified that $\cH(V_5,3)\cong\cS(V_5,3)$ and thus the Simpson space $\cS(V_5,3)$ is irreducible (Proposition \ref{hilbsimp}).

On the other hand, the space $\cM(V_5,3)$ proved not to be irreducible (Proposition \ref{mainprop2} compared with the general result \cite[Theorem 7.9]{LT17}). Two irreducible components of $\cM(V_5,3)$ are known to exist: the closure $\bM(V_5,3)$ of smooth twisted cubic curves and the relative stable map space $\cM(\cZ,3)$. Here $\cZ$ is the universal scheme of the Hilbert scheme $\bH(V_5,1)$ of the lines in $V_5$. Furthermore, the intersection $\bM(V_5,3)$ and $\cM(\cZ,3)$ consists of the space of stable maps into \emph{non-free lines} in $V_5$ (Definition \ref{freevsnon}).

By definition, $\bM(V_5,3)$ and $\bS(V_5,3)$ are obviously birationally equivalent. However, because $V_5$ does not satisfy condition (I) of Section \ref{sec:2.1} (Remark \ref{extob}), the comparison result of \cite{CHK12} cannot be applied in the case in which the initial point of the comparison is Kontsevich's compactification. Instead of this, our comparison of moduli spaces starts at the Simpson compactification $\bS(V_5,3)$.
Let us define a birational map $\Psi^{\mathrm{I}}:\bS(V_5,3)\dashrightarrow \bM(V_5,3)$, which is the inverse correspondence of $[f:C\lr X]\mapsto  f_*\cO_C$ for a stable map $[f:C\lr X]\in \bM(V_5,3)$ (cf. \eqref{raoriginal}).
Let $\bar{\Theta}_1$ be the locus of structure sheaves of a triple line lying on a quadric cone in $V_5$ and let $\bar{\Theta}_2$ be the closure of the locus of stable sheaves supported on an ordered pair of lines in $V_5$ (Section \ref{sec:3.1} and Section \ref{sec:3.2}). It is shown that $\bar{\Theta}_1$ and $\bar{\Theta}_2$ are smooth and $\bar{\Theta}_1\subset\bar{\Theta}_2$ (Proposition \ref{triple} and Proposition \ref{blcenter2}). The main result is to obtain a \emph{partial} desingularized model of $\bM(V_5,3)$ by blowing up $\bS(V_5,3)$ with centers $\bar{\Theta}_i$, $i=1,2$. Specifically,
\begin{theorem}\label{mainthm}
Under the above notations,
let $$\Psi^{\mathrm{I}}:\bS(V_5,3)\dashrightarrow \bM(V_5,3)$$ be the birational map. Then,
\begin{enumerate}
\item the undefined locus of $\Psi^{\mathrm{I}}$ is $\mathrm{Bs}(\Psi^{\mathrm{I}})=\bar{\Theta}_2$.
\item The map $\Psi^{\mathrm{I}}$ extends to a birational regular morphism $\widetilde{\Psi}^{\mathrm{I}}$ by the two weighted blow-ups of $\bS(V_5,3)$ along $\bar{\Theta}_1$ followed by the strict transform of $\bar{\Theta}_2$.
\item The blown-up space $\bM_3(V_5,3)$ of $\bS(V_5,3)$ has at most finite group quotient singularity.
\end{enumerate}
\end{theorem}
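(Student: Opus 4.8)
The plan is to establish the three statements through a stratification of $\bS(V_5,3)\cong\mathrm{Gr}(2,5)$ together with an explicit bubbling procedure realized by elementary modifications of the universal sheaf.

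For part (1), I would run through the boundary strata of $\bS(V_5,3)$ and test, for each stable sheaf $F$, whether the rule $F\mapsto[f:C\to V_5]$ inverting $f\mapsto f_*\cO_C$ is single valued. On sheaves with integral support, and on those supported on a conic meeting a line transversally, the source curve and the map are recovered canonically from $F$ and its support, so $\Psi^{\mathrm{I}}$ is a morphism there. The ambiguity is concentrated on $\bar{\Theta}_2$: when $F$ is supported on an ordered pair of lines or on a triple line, non-isomorphic stable maps can share the same pushforward, the discrepancy being the attaching datum of a contracted or multiple-cover bubble. To pin down $\mathrm{Bs}(\Psi^{\mathrm{I}})=\bar{\Theta}_2$ I would combine regularity off $\bar{\Theta}_2$ with a separation of limits argument on $\bar{\Theta}_2$: for a general point of $\bar{\Theta}_2$ I would exhibit two one parameter families of smooth twisted cubics degenerating to $F$ whose Kontsevich limits disagree, so that no continuous extension of $\Psi^{\mathrm{I}}$ exists there; closedness of the base locus together with irreducibility of $\bar{\Theta}_2$ then upgrades this to the full locus.

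For part (2), since the indeterminacy is most degenerate along $\bar{\Theta}_1\subset\bar{\Theta}_2$, the strategy is to resolve it in two stages. I would first blow up the smaller smooth center $\bar{\Theta}_1$; the exceptional divisor $E_1$ should carry exactly the bubble datum missing at a triple line, namely a choice of degree three map onto the line at the vertex on the quadric cone, so that the partially resolved map becomes regular along $E_1$. Concretely I would construct the universal stable map over $E_1$ as an elementary modification of the pullback of the universal sheaf and then invoke the universal property of $\cM(V_5,3)$. The residual base locus is the strict transform $\widetilde{\bar{\Theta}}_2$, which is a smooth subvariety of the smooth space $\mathrm{Bl}_{\bar{\Theta}_1}\bS(V_5,3)$ because $\bar{\Theta}_1$ is a smooth subvariety of the smooth $\bar{\Theta}_2$; blowing it up and repeating the modification plus universal property argument over the second exceptional divisor $E_2$ yields the everywhere defined morphism $\widetilde{\Psi}^{\mathrm{I}}$ and identifies it as a birational morphism onto the component $\bM(V_5,3)$ of $\cM(V_5,3)$.

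For part (3), the input is that $\bS(V_5,3)\cong\mathrm{Gr}(2,5)$ and that $\bar{\Theta}_1,\bar{\Theta}_2$ are smooth, so generically each step blows up a smoothly embedded center and ambient smoothness propagates; the only place where a singularity can enter is the interaction of the two exceptional divisors, along $E_1\cap\widetilde{\bar{\Theta}}_2$ and its image under the second blow-up. The main obstacle is precisely the local analysis there. I would compute an \'etale local model in coordinates adapted to the incidence point of the two lines, equivalently from the deformation theory of the degenerate sheaves, and exhibit the blown-up space as the quotient of a smooth germ by the automorphism group of the underlying configuration, that is the ordered pair of lines or the triple line on the quadric cone. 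Verifying that these automorphism groups are finite, and that the vanishing order of $\Psi^{\mathrm{I}}$ transverse to each center is the expected one so that no weighted blow-up worse than such a quotient is forced, is what bounds the singularities of the two times blown-up space by finite group quotients.
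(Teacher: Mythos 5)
There is a genuine gap, and it is structural: you have misread what ``blow-ups'' means in this theorem. The two blow-ups in item (2) are \emph{weighted} blow-ups, whose exceptional divisors are $\PP_{(1,2,2,3,3)}^4$-- and $\PP_{(1,2,2)}^2$--fibrations over $\bar{\Theta}_1$ and over the strict transform of $\bar{\Theta}_2$ respectively, the weights coming from the local VGIT analysis of \cite[Section 4.4]{CK11}; that is precisely why item (3) asserts finite quotient singularities rather than smoothness. Your argument instead performs ordinary blow-ups of smooth centers inside the smooth $\Gr(2,5)$ (and, as you note yourself, the strict transform of $\bar{\Theta}_2$ is then again smooth), which would produce a \emph{smooth} space; your subsequent search for quotient singularities along the intersection of $E_1$ with the strict transform of $\bar{\Theta}_2$, attributed to automorphism groups of the curve configurations, is therefore internally inconsistent --- such automorphisms give stacky structure on $\cM(V_5,3)$, not singularities of a blow-up of $\bS(V_5,3)$, and in the theorem the quotient singularities come from the weights, nothing else. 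Worse, nothing in your proposal shows that $\Psi^{\mathrm{I}}$ actually extends after \emph{ordinary} blow-ups. Your key step --- ``construct the universal stable map over $E_1$ as an elementary modification of the pullback of the universal sheaf and then invoke the universal property of $\cM(V_5,3)$'' --- is not a construction: elementary modification is an operation on sheaves, and it is the tool used in \cite{CK11,CHK12} to extend maps \emph{into} the Simpson space. To invoke the universal property of $\cM(V_5,3)$ one needs a flat family of nodal curves together with morphisms to $V_5$ over the blown-up base, and producing that from sheaf data is exactly the hard content of the theorem; neither this paper nor \cite{CK11} ever passes from sheaves to maps in this direct way.

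The idea your proposal is missing is the paper's actual mechanism: reduction to the ambient Grassmannian. Since $G=\Gr(2,5)$ satisfies conditions (I)--(IV), Theorem \ref{th1.3} already provides the chain $\bM_3(G)\to\bM_4(G)\to\bM_5(G)\to\bS(G)$ with sheaf-theoretically described weighted centers, and over $\bS(V_5)$ the first map $\Phi_5$ is an isomorphism because $\dim\Ext^1(\cO_L,\cO_L(-1))\leq 1$ on $V_5$ (proof of Lemma \ref{typeofdeg1}). The entire proof then consists in showing that $\bar{\Theta}_1(G)$ and $\bar{\Theta}_2(G)$ intersect $\bS(V_5)$ \emph{cleanly} along $\bar{\Theta}_1(V_5)$ and $\bar{\Theta}_2(V_5)$, i.e.\ that the set-theoretic intersections are scheme-theoretic, via L.~Li's tangent-space criterion \eqref{cleanlycondition} from \cite[Lemma 5.1]{Li09}. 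This is verified by a deformation-theoretic computation: both cokernels in the diagram \eqref{com1} comparing $N_{\bar{\Theta}_1(V_5)/\bS(V_5)}$ with $N_{\bar{\Theta}_1(G)/\bS(G)}$ are identified with $\Hom(F,F\otimes N_{V_5/G})$, where $N_{V_5/G}=\cO_{V_5}(1)^{\oplus 3}$, using the extension structure of $F$ and Proposition \ref{defsheaves}. Granting cleanness, the two weighted blow-ups of $\bS(V_5)$ are the base change (in the sense of \cite[Lemma 3.1]{MM07}) of the Grassmannian ones, and all three items --- the base locus, the extension $\widetilde{\Psi}^{\mathrm{I}}$, and the quotient singularities --- are inherited by restriction. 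Your proposal contains no substitute for this comparison, nor for the CK11-type local analysis it replaces, so parts (2) and (3) remain unproven; only your sketch for part (1) (regularity off $\bar{\Theta}_2$ plus non-uniqueness of limits along it) is viable in outline, and even that the paper obtains for free from the same restriction argument.
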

\[\xymatrix{\bM_3(V_5,3)\ar[d]_{\mathrm{strict}\;\mathrm{trans.}\;\mathrm{of}\;\bar{\Theta}_2}\ar[ddr]^{\widetilde{\Psi}^{\mathrm{I}}}&\\
\bM_4(V_5,3)\ar[d]_{\bar{\Theta}_1}&\\
\bS(V_5,3)\ar@{-->}[r]^{\Psi^{\mathrm{I}}}&\bM(V_5,3).}\]

The main ingredient of the proof is to use Theorem \ref{th1.3} when $X$ is a Grassmannian variety $\Gr(2,5)$. That is, we prove that the restriction of the blow-up maps \eqref{blowup2} over $\bS(V_5,3)$ coincides with the blowing up maps in item (2) of Theorem \ref{mainthm}. To do this, it is sufficient to check that the set-theoretic intersection of blow up centers is the scheme-theoretic one. A sufficient condition under which this is valid was discovered by Li (\cite[Lemma 5.1]{Li09}). Let $A$ and $B$ be smooth closed subvarieties of a smooth variety and let the set-theoretic intersection $U=A\cap B$ be smooth. If
\begin{equation}\label{cleanlycondition}
T_u U= T_u A \cap T_u B
\end{equation}
for each $u\in U$, then $U$ is the scheme-theoretic intersection (i.e., $I_A + I_B =I_U$). In this case we say that $A$ and $B$ \emph{cleanly} intersect along the subvariety $U$.

A careful analysis of the blow up maps of item (2) in Theorem \ref{mainthm}, reveals that the extended birational morphism $ \widetilde{\Psi}^{\mathrm{I}}$ is a \emph{small} map (See Section \ref{sec:5.2} for the definition).
From this and item (3) of Theorem \ref{mainthm}, the intersection Poincar\'e polynomial of $\bM (V_5,3)$ (Corollary \ref{corpoin}) is obtained.
\begin{remark}
For the case $d=2$, one can prove that $\cM(V_5,2)$ also consists of two irreducible components and the smooth blow up of $\cH(V_5,2)(=\PP^4)$ along the locus of non-free lines (Lemma \ref{nonfreeinconic}) is isomorphic to $\bM(V_5,2)$ by an argument parallel to that of Kiem (\cite{Kie07}).
\end{remark}
\subsection{Contents of the paper}
In Section \ref{sec:preliminaries}, we review the results of our previous study in \cite{CHK12} and collect some interesting properties of rational curves in $V_5$. Stable sheaves supported on non-reduced curves are presented in Section \ref{sect:3}. It is proven that stable sheaf $F$ on $V_5$ depends only on its multiplicity for each irreducible component of $\mathrm{Supp}(F)$ (Lemma \ref{typeofdeg1} and Lemma \ref{typeofdeg}). Furthermore, we describe explicitly the parameter space of stable sheaves having a non-reduced support via a local computation (Proposition \ref{triple} and Proposition \ref{blcenter2}). In Section \ref{sec:4}, we explain the global geometry of the Kontsevich space $\cM(V_5,3)$. We describe the irreducible components $\cM(V_5,3)$ and its intersection part in Proposition \ref{mainprop2}. In the last section, we prove Theorem \ref{mainthm} and calculate the intersection Poincar\'e polynomial of $\bM(V_5,3)$ (Corollary \ref{corpoin}).
\begin{notation}
\begin{itemize}
\item Let us denote by $\Gr(k,n)$ the Grassimannian variety parameterizing $k$-dimensional subspaces in a fixed vector space $V$ with $\dim V=n$.
\item Let $\PP_{(w_0,w_1,...,w_r)}^r$ be the weighted projective space of dimension $r$ with weight $(w_0,w_1,...,w_r)$.
\item We sometimes do not distinguish the moduli point $[x]\in \cM$ and the object $x$ parameterized by $[x]$ when no confusion can arise.
\item All of the exact sequences and the extension groups are considered in the ambient space but not in the support of relevant sheaves.
\end{itemize}
\end{notation}

\section{Preliminaries}\label{sec:preliminaries}
In this section, we recall the results of our comparison between the Kontsevich and Simpson space, which we intensively studied previously \cite{CHK12}. In addition, we discuss some algebro-geometric properties of lines and conics in $V_5$. We finally mention the well-known fact of the deformation theory of maps and sheaves. Hereinafter, the abbreviated notations $\bM(V_5)$ (or $\bM$) are at times used instead of $\bM(V_5,d)$ etc. when the meaning is clear from the context.
\subsection{Summary of the result in \cite{CHK12}}\label{sec:2.1}
In \cite{CHK12}, as a generalization of the case $X=\PP^r$ (\cite{CK11}), the authors compared the compactifications of rational curves when a smooth projective variety $X$ satisfies the following conditions (\cite[Lemma 2.1]{CHK12}).
\begin{enumerate}[(I)]
\item $\rH^1(\PP^1,f^*T_X)=0$ for any morphism $f:\PP^1\to X$.
\item $\mathrm{ev}:\cM_{0,1}(X,1)\to X$ is \emph{smooth} where
$$\cM_{0,1}(X,1)=\{(f:\PP^1\to X, p\in
\PP^1)\,|\,\mathrm{deg} f^*\cO_X(1)=1 \}$$ is the moduli space of
$1$-pointed lines on $X$ and $\mathrm{ev}$ is the evaluation map at the marked point.
\item The moduli space $F_2(X)$ of the planes in $X$ is smooth.
\item The defining ideal $I_X$ of $X$ in $\PP^r$ is generated by quadratic polynomials.
\end{enumerate}
For example, one can easily check that the Grassmannian variety $\Gr(k,n)$ satisfies all of conditions (I)-(IV).
For the various $X$ satisfying these conditions, it was previously proved that compactifications of rational curves of degree $\leq 3$ are related by explicit blow-ups/downs (\cite{CHK12}).
We recall the detail of the case $d=3$ for later use.
By taking the direct image $f_*\cO_C$ for a stable map $f:C\to X$ in $\bM=\bM (X,3)$, we obtain a birational map
\begin{equation}\label{raoriginal}
\Psi:\bM\dashrightarrow \bS=\bS(X,3),\; [f:C\lr X]\mapsto f_*\cO_C.
\end{equation}
The undefined locus of $\Psi$ (i.e., the locus of unstable sheaves) is the union of two subvarieties;
\begin{enumerate}
\item the locus $\Gamma_0^1$ of stable maps $f:C\lr X$ such that the image $f(C)=L$ is a line,
\item the locus $\Gamma_0^2$ of stable maps $f:C\lr X$ such that $f(C)=L_1\cup L_2$ is a pair of lines.\end{enumerate}

If $f\in \Gamma_0^1$, then $f_*\cO_C=\cO_L\oplus \cO_L(-1)^2$ and the normal space of $\Gamma^1_0$ in $\bM$ at $f$ is
\begin{equation}\label{ex1}
\Hom (\CC^2,\Ext^1_X(\cO_L,\cO_L(-1))).
\end{equation}
If $f\in \Gamma_0^2$ such that the restriction map $f|_{f^{-1}(L_i)}$ is an $i$-fold covering map onto the image $L_i$, then there exists a short exact sequence $\ses{\cO_{L_1\cup L_2}}{f_*\cO_C}{\cO_{L_2}(-1)}$. The normal space of $\Gamma^2_0$ in $\bM$ at $f$ is isomorphic to
\begin{equation}\label{ex2}
\Ext_X^1(\cO_{L_1\cup L_2}, \cO_{L_2}(-1)).
\end{equation}

Let $\bM_1$ be the blow-up of $\bM$ along $\Gamma_0^1$. By taking the elementary modification of sheaves with respect to the quotient $f_*\cO_C\twoheadrightarrow \cO_L(-1)^2$, we have an extension map
$
\bM_1\dashrightarrow \bS
$
of the birational map $\Psi$ in \eqref{raoriginal} where the locus of unstable sheaves in $\bM_1$ consists of two subvarieties;
\begin{enumerate}
\item the strict transform $\Gamma_1^2$ of $\Gamma_0^2$,
\item the subvariety $\Gamma_1^3$ of the exceptional divisor $\Gamma_1^1$, which are fiber bundles over $\Gamma^1_0$ with fibers
$$\PP \Hom_1 (\CC^2,\Ext^1_X(\cO_L,\cO_L(-1)))\cong \PP^1\times \PP \Ext^1_X(\cO_L,\cO_L(-1)),$$
where $\Hom_1$ denotes the locus of rank $1$ homomorphisms.
\end{enumerate}
By blowing up $\bM_1$ along $\Gamma_1^2$ followed by $\Gamma_2^3$ (the strict transform of $\Gamma_1^3$ along the second blow-up map)
and by again conducting an elementary modification of the sheaves, we obtain a birational morphism $\Psi_3:\bM_3\lr \bS$
which extends the original birational map $\Psi$ in \eqref{raoriginal}.
\begin{equation}\label{blowup1}
\xymatrix{\bM_3\ar[d]_{\Gamma_2^3}\ar[rrdd]^{\Psi_3}&&\\
\vdots\ar[d]_{\Gamma_0^1}\ar@{-->}[rrd]&&\\
\bM\ar@{-->}[rr]^{\Psi}&&\bS
}\end{equation}
A study of the analytic neighborhoods of $\Gamma^1_0$ and $\Gamma^2_0$ by using blow-up maps reveals that the local structure of the map $\Psi_3$ is
completely determined by a VGIT-quotients (see \cite[Section 4.4]{CK11} for a detail description). Eventually, we have a sequence of blow-down maps
\begin{equation}\label{blowup2}
\Psi_3:\bM_3\stackrel{\Phi_3}{\longrightarrow} \bM_4\stackrel{\Phi_4}{\longrightarrow} \bM_5\stackrel{\Phi_5}{\longrightarrow} \bM_6=\bS
\end{equation}
such that $\Psi_3=\Phi_5\circ \Phi_4\circ\Phi_3$. As we study the parameterized sheaves by the exceptional divisors while blowing up \eqref{blowup1}, the blow up centers of $\Phi_{5-j}$ in \eqref{blowup2} are described in terms of stable sheaves ($j=0,1,2$). Let $\Theta_j$ be the blow-up center of the birational map $\Phi_{5-j}:\bM_{5-j}\lr \bM_{6-j}$ in \eqref{blowup2}.
\begin{lemma}\label{bcenter}
Let $L$ and $L'$ be lines ($L \cap L'=\{\mathrm{pt}.\}$) in a smooth projective variety $X$ satisfying the conditions: (I)-(IV). Then,
\begin{enumerate}
\item The locus $\Theta_0$ parameterizes stable sheaves $F$ such that they fits into the non-split extension
\[
\ses{\cO_L(-1)\oplus \cO_L(-1)}{F}{\cO_L}.
\]
Thus, $\Theta_0$ is a fibration over $\bH(X,1)$ with fiber
$$\PP(\Ext_X^1(\cO_L,\cO_L(-1)\oplus \cO_L(-1)))^{\mathrm{s}}\git SL(2) \cong \mathrm{Gr}(2, \mathrm{dim} \Ext_X^1(\cO_L, \cO_L(-1))),$$
where $(\cdot)^{\mathrm{s}}$ denotes the locus of stable points (or equivalently, stable sheaves).
\item The locus $\Phi_5(\Theta_1)$ parameterizes sheaves $F$ such that they fit into the non-split extension
\[
\ses{\cO_{L}(-1)}{F}{\cO_{L^2}},
\]
where $L^2$ is a planar double line (\cite[page 41, XI]{EH82}).
Hence, it is a fibration over $\bH(X,1)$ with fiber
$$\PP( \Ext_X^1(\cO_{L^2},\cO_L(-1))).$$
\item
The locus $\Phi_5\circ\Phi_4(\Theta_2)$ is the closure of the locus that parameterizes sheaves $F$ such that they fit into the non-split extension
\[
\ses{\cO_{L}(-1)}{F}{\cO_{L\cup L'}}.
\]
Therefore, it is the closure of a fibration over the locus of intersecting lines in $\bH(X,1)\times \bH(X,1)\setminus \Delta$ with fiber $$\PP( \Ext_X^1(\cO_{L\cup L'},\cO_L(-1)))$$
where $\Delta$ is the diagonal of $\bH(X,1)\times \bH(X,1)$.
\end{enumerate}
\end{lemma}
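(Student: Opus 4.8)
The plan is to reconstruct each center $\Theta_j$ by tracking the universal sheaf through the elementary modifications over the blow-up tower \eqref{blowup1} and then reading off the resulting strata after the VGIT-flips \eqref{blowup2}, following the local analysis of \cite[Section 4.4]{CK11} line by line. The only geometric inputs are that $\Gamma_0^1$ and $\Gamma_0^2$ are smooth with the normal bundles recorded in \eqref{ex1}--\eqref{ex2}, that $\bH_1(X)$ is smooth, and that the relevant $\Ext^1$-groups have locally constant dimension; all of these are consequences of conditions (I)--(IV) and are supplied by \cite{CHK12}. Granting this, for each $j$ it suffices to (i) identify the isomorphism type of the stable sheaf that the modified universal family places over a general point of the exceptional locus, and (ii) exhibit the fibration over the Hilbert scheme of lines.

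For (1) I would start from a general $f\in\Gamma_0^1$, a triple cover of a line $L$, for which $f_*\cO_C=\cO_L\oplus \cO_L(-1)^{\oplus 2}$; here $\cO_L\subset f_*\cO_C$ is the unique destabilizing subsheaf since $p(\cO_L)=m+1>m+1/3=p(f_*\cO_C)$. The elementary modification with respect to the quotient $f_*\cO_C\twoheadrightarrow \cO_L(-1)^{\oplus 2}$ exchanges sub and quotient, producing a sheaf $F$ in $\ses{\cO_L(-1)^{\oplus 2}}{F}{\cO_L}$; such extensions are classified by $\Ext^1_X(\cO_L,\cO_L(-1)^{\oplus 2})=\Hom(\CC^2,\Ext^1_X(\cO_L,\cO_L(-1)))$, in agreement with \eqref{ex1}. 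The automorphism group $GL(2)$ of $\cO_L(-1)^{\oplus 2}$ acts on this space, and the local model near $\Theta_0$ of the transformation $\bM\dashrightarrow\bS$ is exactly the VGIT wall-crossing for this $GL(2)$-action: the Kontsevich chamber selects the rank-$1$ homomorphisms, giving the fiber $\PP^1\times \PP\Ext^1_X(\cO_L,\cO_L(-1))$ of $\Gamma_1^3$, whereas the Simpson chamber selects the rank-$2$, i.e. stable, homomorphisms. The rank-$2$ (equivalently injective) condition guarantees that no sub-line-bundle $\cO_L(-1)\subset\cO_L(-1)^{\oplus 2}$ extends to a destabilizing subsheaf of $F$, so $F$ is stable and the extension is non-split. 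Projectivizing and passing to the $SL(2)$-quotient of the stable locus then identifies the fiber with $\Gr(2,\dim \Ext^1_X(\cO_L,\cO_L(-1)))$, and letting $L$ vary over the smooth base $\bH_1(X)$ gives the asserted fibration.

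For (2) and (3) I would analyze $\Gamma_0^2$ in the same way. A general point is a map with $f(C)=L_1\cup L_2$ of degrees $1$ and $2$, giving $\ses{\cO_{L_1\cup L_2}}{f_*\cO_C}{\cO_{L_2}(-1)}$ with destabilizing sub $\cO_{L_1\cup L_2}$, now of reduced polynomial $m+1/2$. The corresponding elementary modification yields $\ses{\cO_{L_2}(-1)}{F}{\cO_{L_1\cup L_2}}$, classified by $\Ext^1_X(\cO_{L_1\cup L_2},\cO_{L_2}(-1))$, matching \eqref{ex2}; writing $L=L_2$ and $L'=L_1$ this is statement (3), and the base is the ordered configuration space $\bH_1(X)\times\bH_1(X)\setminus\Delta$ precisely because the doubled line $L$ and the reduced line $L'$ play asymmetric roles. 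The locus $\Theta_1$ is then obtained by letting $L'$ collide with $L$: the flat limit of $L\cup L'$ is the planar double line $L^2$, and the same modification produces $\ses{\cO_L(-1)}{F}{\cO_{L^2}}$ with fiber $\PP\Ext^1_X(\cO_{L^2},\cO_L(-1))$, which is statement (2). The closure appearing in (3) records exactly this degeneration, together with the further specialization into $\Theta_0$.

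The step I expect to be the main obstacle is the rigorous identification of the fiber in (1) through the VGIT: one must check that the local analytic model of the blow-down $\Phi_5$ really is the $GL(2)$-quotient of $\Hom(\CC^2,\Ext^1_X(\cO_L,\cO_L(-1)))$, i.e. that the elementary modification of the universal sheaf globalizes over $\bH_1(X)$ and that the two GIT chambers match the Kontsevich and Simpson sides, together with the verification that stability of $F$ is equivalent to the rank-$2$ (respectively non-split) condition. These are precisely the points carried out in detail in \cite[Section 4.4]{CK11} for $X=\PP^r$, and the argument transfers verbatim once conditions (I)--(IV) supply the required smoothness of $\bH_1(X)$, $\Gamma_0^1$, $\Gamma_0^2$ and the constancy of the $\Ext^1$-dimensions.
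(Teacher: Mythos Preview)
Your proposal is correct and follows essentially the same approach as the paper. In fact the paper gives no separate proof of this lemma: it is presented as a summary of \cite{CHK12}, and the preceding paragraphs (the description of $f_*\cO_C$ on $\Gamma_0^1$ and $\Gamma_0^2$, the elementary modifications, the normal spaces \eqref{ex1}--\eqref{ex2}, and the reference to the VGIT analysis of \cite[Section 4.4]{CK11}) are exactly the ingredients you assemble, so your write-up is effectively the argument the paper leaves implicit.
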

\begin{proof}
We refer the reader to \cite[Section 4.1]{CHK12} for the proof of the claims.
\end{proof}
We can summarize the above discussion as follows.
\begin{theorem}\tcite{Theorem 1.7}{CHK12}\label{th1.3}
Let $X\subset \PP^r$ be a smooth projective variety satisfying conditions (I)-(IV) above.
$\bS(X,3)$ is obtained from $\bM(X,3)$ by blowing up along $\Gamma^1_0$,
$\Gamma^2_1$, and $\Gamma^3_2$
 and then
blowing down along $\Theta_1$, $\Theta_2$, and $\Theta_3$ (cf. \eqref{blowup1} and \eqref{blowup2}).
\end{theorem}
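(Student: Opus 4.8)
The plan is to construct the stated diagram directly from the direct-image assignment \eqref{raoriginal} and then to read off both the blow-up centers and the blow-down centers from local deformation theory together with a variation of GIT. First I would observe that $\Psi\colon\bM\dashrightarrow\bS$ is birational: over the open locus $R(X,3)$ of smooth twisted cubics the two moduli problems agree, since an embedding $f$ has $f_*\cO_C=\cO_{f(C)}$, a stable sheaf supported on the image, so $\Psi$ is an isomorphism there. Its indeterminacy is exactly the locus where $f_*\cO_C$ fails to be a stable sheaf, and a short computation of Hilbert polynomials shows that this occurs precisely for the multiple covers collected in $\Gamma_0^1$ and $\Gamma_0^2$: a stable map whose image is a single line $L$ forces $f_*\cO_C=\cO_L\oplus\cO_L(-1)^2$, which is visibly unstable, and analogously for a cover of a pair of lines.

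Next I would resolve the indeterminacy on the Kontsevich side. The normal spaces of $\Gamma_0^1$ and $\Gamma_0^2$ in $\bM$ are identified by deformation theory with \eqref{ex1} and \eqref{ex2}, and here the conditions (I)--(IV) do the essential work: (I) gives $\rH^1(\PP^1,f^*T_X)=0$, so $\bM$ is smooth of the expected dimension; (II) and (III) control the incidence geometry of lines and planes so that $\Gamma_0^1$, $\Gamma_0^2$ and the strict transforms appearing in \eqref{blowup1} are smooth with Ext-groups of locally constant rank; and (IV) guarantees $\bH(X,3)=\bS(X,3)$ on the target side. Blowing up $\Gamma_0^1$ and performing the elementary modification of the universal sheaf against the destabilizing quotient $f_*\cO_C\twoheadrightarrow\cO_L(-1)^2$ extends $\Psi$ over the exceptional divisor, leaving the new indeterminacy $\Gamma_1^2\cup\Gamma_1^3$; blowing these up in turn and modifying once more yields a genuine morphism $\Psi_3\colon\bM_3\to\bS$ extending $\Psi$.

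The heart of the argument is to show that $\Psi_3$ factors as the three smooth blow-downs $\Phi_3,\Phi_4,\Phi_5$ of \eqref{blowup2} with the centers supplied by Lemma \ref{bcenter}. For this I would pass to the analytic-local model of the moduli functor near each exceptional stratum: the deformations are governed by the relevant spaces in \eqref{ex1}--\eqref{ex2}, and the two stability notions (stable map versus stable sheaf) correspond to two linearizations on the same parameter space, quotiented by the automorphism group of the destabilized object --- $\SL(2)$ for the triple-line stratum $\Theta_0$ and the smaller stabilizers for the remaining strata. The models $\bM_3$ and $\bS$ then sit on opposite sides of the resulting VGIT walls, and reading off the GIT-(semi)stable loci across each wall exhibits the contractions $\Phi_{5-j}$ together with their exceptional projective and Grassmannian bundles; the images of these bundles are precisely the loci of non-split extensions described in Lemma \ref{bcenter}, which identifies the blow-down centers $\Theta_1,\Theta_2,\Theta_3$.

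The main obstacle I anticipate is this last identification: establishing that each $\Phi_{5-j}$ is an \emph{honest} smooth blow-down --- a contraction of a projective bundle onto its base, scheme-theoretically --- rather than merely a birational contraction, and that the three VGIT chambers fit together consistently over the whole family rather than fiberwise. This reduces to verifying that $\dim\Ext^1_X(\cO_L,\cO_L(-1))$ is constant along $\bH_1(X)$ and that the master-space construction globalizes, so that the local wall-crossing pictures patch into the single sequence \eqref{blowup2}. Granting the constancy of these Ext-ranks --- which is exactly where the hypotheses (I)--(IV) are used --- the global factorization follows.
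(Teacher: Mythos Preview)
Your proposal is correct and follows essentially the same approach as the paper's Section~\ref{sec:2.1}, which is itself a summary of the argument in \cite{CHK12} rather than an independent proof: the paper outlines the same sequence of blow-ups of $\Gamma_0^1,\Gamma_1^2,\Gamma_2^3$ with elementary modifications extending $\Psi$ to $\Psi_3$, and then appeals to a VGIT analysis of the analytic neighborhoods (referring to \cite[Section 4.4]{CK11}) to factor $\Psi_3$ as the three blow-downs $\Phi_3,\Phi_4,\Phi_5$ with centers described in Lemma~\ref{bcenter}. Your identification of the main obstacle --- globalizing the fiberwise VGIT picture and verifying constancy of the relevant $\Ext$-dimensions under hypotheses (I)--(IV) --- is also the point the paper defers to \cite{CHK12} and \cite{CK11}.
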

Sometimes we denote $\Theta_i$ (resp. $\Phi_i$) by $\Theta_i(X)$ (resp. $\Phi_i^X$) for stressing the ambient variety $X$.
In Theorem \ref{th1.3}, the blow-up map $\Phi_j^X$ for a projective variety $X\subset \PP^r$ is simply the restriction map $\Phi_j^X=\Phi_j^{\PP^r}|_X$ because the space of curves in $X$ has a cleanly intersection with the exceptional center in the case $\PP^r$. Thus, the fiber of the exceptional divisor of $\Phi_i^X$ is that of $\Phi_i^{\PP^r}$. For instance, the exceptional divisor of $\Phi_3^X$ (resp. $\Phi_4^X$) in \eqref{blowup2} is a $\PP_{(1,2,2)}^2$ (resp. $\PP_{(1,2,2,3,3)}^4$)-fibration over its base $\Theta_2(X)$ (resp. $\Theta_1(X)$) (\cite[Section 4.4]{CK11}). The main goal of this study is to prove that the same phenomenon occurs even though the variety $X=V_5$ does not satisfy condition (I).

\begin{remark}\label{triline}
The stable sheaf $F$ in item (2) of Lemma \ref{bcenter} is generically of the form $F\cong \cO_{L^3}$ where $L^3$ is defined by \emph{the triple line lying on a quadratic cone} (\cite[page 41, XIV]{EH82} and \cite[Example 4.16]{CK11}).
\end{remark}
\subsection{Lines and conics in $V_5$}
Some algebro-geometric properties of lines and (non-reduced) conics in $V_5$ are required to describe the blow-up centers of $\bS(V_5,3)$.
\begin{proposition}[\protect{\cite[Section 1]{FN89}}]\label{linesinv1}
The normal bundle of a line $L$ in $V_5$ is isomorphic to $$N_{L/V_5}\cong \cO_L(1)\oplus \cO_L(-1)\;\mbox{ or }\; \cO_L\oplus \cO_L.$$
\end{proposition}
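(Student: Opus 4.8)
The plan is to determine the splitting type of $N_{L/V_5}$ by combining a degree computation on $V_5$ with the ambient geometry of $\Gr(2,5)$. Since $L\cong\PP^1$, Grothendieck's theorem gives $N_{L/V_5}\cong\cO_L(a)\oplus\cO_L(b)$ for unique integers $a\geq b$, so it suffices to pin down $a$ and $b$. First I would read off $a+b=\deg N_{L/V_5}$ from the normal bundle sequence $0\to T_L\to T_{V_5}|_L\to N_{L/V_5}\to 0$. As $V_5$ is a del Pezzo threefold we have $-K_{V_5}=2H$ with $H\cdot L=1$, so $\deg(T_{V_5}|_L)=-K_{V_5}\cdot L=2$, while $\deg T_L=\deg T_{\PP^1}=2$. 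Hence $a+b=\deg N_{L/V_5}=0$ and $N_{L/V_5}\cong\cO_L(a)\oplus\cO_L(-a)$ with $a\geq 0$; the whole problem reduces to showing $a\leq 1$.

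To control $a$ I would pass to the chain $L\subset V_5\subset\Gr(2,5)$ and use the exact sequence
\begin{equation*}
0\to N_{L/V_5}\to N_{L/\Gr(2,5)}\to N_{V_5/\Gr(2,5)}|_L\to 0.
\end{equation*}
Since $V_5$ is cut out in $\Gr(2,5)$ by three hyperplane sections, $N_{V_5/\Gr(2,5)}\cong\cO_{V_5}(1)^{\oplus 3}$, so the right-hand term is $\cO_L(1)^{\oplus 3}$. The key input is the splitting type of $N_{L/\Gr(2,5)}$, which I would compute explicitly from the tautological sequences. Writing a line as the pencil $\{W : U\subset W\subset U'\}$ for a flag $U\subset U'$ with $\dim U=1$, $\dim U'=3$, one identifies $L=\PP(U'/U)$ and finds $S|_L\cong\cO_L\oplus\cO_L(-1)$ and $Q|_L\cong\cO_L(1)\oplus\cO_L^{\oplus 2}$, whence
\begin{equation*}
T_{\Gr(2,5)}|_L\cong S^{\vee}|_L\otimes Q|_L\cong\cO_L(2)\oplus\cO_L(1)^{\oplus 3}\oplus\cO_L^{\oplus 2}.
\end{equation*}
Quotienting by $T_L=\cO_L(2)$ gives $N_{L/\Gr(2,5)}\cong\cO_L(1)^{\oplus 3}\oplus\cO_L^{\oplus 2}$ (of degree $3$ and rank $5$, consistent with $-K_{\Gr(2,5)}\cdot L=5$).

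Finally I would extract the bound from the inclusion $N_{L/V_5}\hookrightarrow N_{L/\Gr(2,5)}$, which is a subbundle inclusion because the quotient in the sequence above is locally free. Restricting to the most positive summand yields a nowhere-vanishing map $\cO_L(a)\to\cO_L(1)^{\oplus 3}\oplus\cO_L^{\oplus 2}$. But if $a\geq 2$ then $\Hom(\cO_L(a),\cO_L(1))=H^0(\cO_L(1-a))=0$ and $\Hom(\cO_L(a),\cO_L)=0$, forcing this map to vanish, a contradiction. Hence $a\leq 1$, and together with $a+b=0$ and $a\geq b$ this leaves exactly $(a,b)\in\{(1,-1),(0,0)\}$, i.e. $N_{L/V_5}\cong\cO_L(1)\oplus\cO_L(-1)$ or $\cO_L\oplus\cO_L$.

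The main obstacle is the precise identification $N_{L/\Gr(2,5)}\cong\cO_L(1)^{\oplus 3}\oplus\cO_L^{\oplus 2}$: freeness of the line together with the degree and rank alone would still permit unbalanced types such as $\cO_L(2)\oplus\cO_L(1)\oplus\cO_L^{\oplus 3}$, which would not rule out $a=2$. This is why I would insist on the explicit tautological computation above (equivalently, invoke that minimal rational curves on an irreducible Hermitian symmetric space carry a balanced normal bundle $\cO(1)^{p}\oplus\cO^{q}$); everything else in the argument is a routine degree bookkeeping.
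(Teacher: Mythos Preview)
Your argument is correct. The paper does not give its own proof of this proposition; it is quoted verbatim as a result of Furushima--Nakayama \cite[Section 1]{FN89} and used as a black box throughout. So there is nothing in the paper to compare against, but your self-contained argument is a perfectly valid substitute: the degree count $a+b=0$ from $-K_{V_5}=2H$, the explicit tautological computation $N_{L/\Gr(2,5)}\cong\cO_L(1)^{\oplus 3}\oplus\cO_L^{\oplus 2}$, and the subbundle embedding $N_{L/V_5}\hookrightarrow N_{L/\Gr(2,5)}$ forcing $a\le 1$ are all sound. Your caveat at the end is well placed---the bound $a\le 1$ really does hinge on knowing the exact splitting type of $N_{L/\Gr(2,5)}$ rather than just its degree---and your tautological computation handles it cleanly.
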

\begin{definition}\label{freevsnon}
The line of the first (resp. second) type in Proposition \ref{linesinv1} is defined as a \emph{non-free} (resp. \emph{free}) line.
\end{definition}
The space of the non-free lines in $V_5$ provides some interesting subvarieties in the Hilbert schemes of higher degree rational curves.
\begin{lemma}[\protect{\cite[Section 2]{FN89}}]\label{nonfreeinconic}
The locus of non-free lines is a smooth conic in the Hilbert scheme $\bH(V_5,1)=\PP^2$.
\end{lemma}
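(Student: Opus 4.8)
\emph{Proof proposal.}
The plan is to identify the non-free locus as an $\mathrm{Aut}(V_5)$-invariant closed subscheme of $\bH(V_5,1)=\PP^2$ and to read off its structure from the very restricted geometry of the automorphism action. First I would record a cohomological criterion separating the two types in Proposition \ref{linesinv1}: since $\deg N_{L/V_5}=0$, both candidate bundles have $h^0=2$ and $h^1=0$, so $H^1(N_{L/V_5})$ does not detect the difference and one must twist down by one. A direct count gives $h^0(N_{L/V_5}(-1))=0$ for the free type $\cO_L^{\oplus 2}$ and $h^0(N_{L/V_5}(-1))=1$ for the non-free type $\cO_L(1)\oplus\cO_L(-1)$. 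Thus the non-free locus is exactly
\[
D=\{[L]\in\PP^2 : h^0(L,N_{L/V_5}(-1))\neq 0\},
\]
which is closed by upper semicontinuity.

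To give $D$ a scheme structure and bound its size, I would work on the universal line $\cI\subset\PP^2\times V_5$ with projections $p:\cI\to\PP^2$ and $q:\cI\to V_5$, and form the relative normal bundle $\cN=N_{\cI/\PP^2\times V_5}$, whose restriction to $p^{-1}[L]$ is $N_{L/V_5}$. On every fibre $\chi(N_{L/V_5}(-1))=0$, so $R^\bullet p_*\big(\cN\otimes q^*\cO_{V_5}(-1)\big)$ is computed by a two-term complex $[E^0\xrightarrow{\varphi}E^1]$ of bundles of equal rank with $\varphi$ generically an isomorphism, and $D$ is its degeneracy locus $\{\det\varphi=0\}$. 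Grothendieck--Riemann--Roch for $p$ then evaluates the class $[D]=c_1(E^1)-c_1(E^0)=-c_1\big(p_!(\cN\otimes q^*\cO_{V_5}(-1))\big)$ in $A^1(\PP^2)=\ZZ\cdot[\ell]$; I expect the outcome $[D]=2[\ell]$, so that $D$ is a nonempty curve of degree $2$.

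The smoothness and the precise shape of this conic I would extract from symmetry rather than from further computation. Using the classical fact that $\mathrm{Aut}(V_5)\cong\mathrm{PGL}_2$ acts on $\bH(V_5,1)=\PP^2$ as the projectivisation $\PP(\Sym^2 W)$ of binary quadratic forms ($W=\CC^2$), the normal-bundle type is constant along orbits, so $D$ is $\mathrm{PGL}_2$-invariant. Because $\Sym^2 W$ is an irreducible $\mathrm{SL}_2$-representation, there are no invariant points and no invariant lines in $\PP(\Sym^2 W)$, and the action has exactly two orbits: the smooth discriminant conic of perfect squares (the image of the Veronese $\PP(W)\hookrightarrow\PP(\Sym^2 W)$) and its open complement. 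Hence the only invariant curve is that smooth conic, and since $[D]=2[\ell]$ forces $\dim D=1$ with $D\neq\PP^2$, we conclude that $D$ is precisely the reduced, smooth Veronese conic; the degenerate alternatives are excluded automatically, as a double line or a pair of lines would contain an invariant line, which does not exist.

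The main obstacle is the single explicit input that the two orbits really carry \emph{different} normal-bundle types; everything else is formal once this is known. Concretely I would compute $N_{L/V_5}$ for one representative line in each orbit — say the line attached to $x^2$ on the conic and the one attached to $xy$ in the open orbit — using the Euler and normal sequences of $V_5\subset\Gr(2,5)\subset\PP^9$ to present $N_{L/V_5}$ and its twist. This computation simultaneously furnishes nonemptiness and properness of $D$ (one non-free and one free line) and thereby makes the $\mathrm{PGL}_2$-invariance argument self-contained, bypassing the Grothendieck--Riemann--Roch step entirely: an invariant closed $D$ that is neither empty nor all of $\PP^2$ must be the unique one-dimensional orbit closure, namely the smooth conic.
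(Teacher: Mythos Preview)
The paper does not supply its own proof of this lemma; it simply quotes the result from \cite[Section 2]{FN89}. So there is no argument in the paper to compare against, only the original reference, where Furushima--Nakayama proceed by an explicit coordinate description of the universal family of lines on $V_5$ and compute the normal bundle type directly.

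Your approach is correct and is in fact more conceptual than the computation in \cite{FN89}. The cohomological criterion via $h^0(N_{L/V_5}(-1))$ is the right discriminating invariant, and the core of your argument---that a proper, nonempty, closed $\mathrm{PGL}_2$-invariant subset of $\PP(\Sym^2 W)$ must be the Veronese conic---is clean and decisive. Once the orbit structure is known, set-theoretic equality with the smooth conic is immediate, and no further scheme-theoretic analysis is needed for the statement as phrased. The Grothendieck--Riemann--Roch step is, as you say yourself, dispensable; the two-orbit picture already forces the answer.

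Two inputs carry the weight of the argument and should not be treated as free. First, the identification $\mathrm{Aut}(V_5)\cong\mathrm{PGL}_2$ together with the fact that the induced action on $\bH(V_5,1)\cong\PP^2$ is the standard one on $\PP(\Sym^2 W)$ is classical (Mukai--Umemura, or see \cite{San14}) but is a genuine structural fact about $V_5$, not a formality; you should cite it explicitly. Second, the verification that the two orbits actually realise the two normal-bundle types---i.e.\ that neither $D=\varnothing$ nor $D=\PP^2$---is essential and is exactly the explicit computation that \cite{FN89} performs; you defer it to ``one representative line in each orbit'', which is fine as a plan, but be aware that this step \emph{is} the content of the reference and is not entirely trivial (one typically uses the restriction of the tautological sequence on $\Gr(2,5)$, as in Proposition~\ref{linesinv}). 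With those two inputs granted, your argument is complete and arguably more transparent than the original.
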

Let us define the \emph{double line} as the non-split extension (stable) sheaf $F$
\[
\ses{\cO_L(-1)}{F}{\cO_L},
\]
where $L$ is a line.
In fact, the sheaf $F$ is isomorphic to $F\cong\cO_{L^2}$, where $L^2$ is a non-reduced plane conic (i.e., the reduced support of $F$ is $\mathrm{red}\mathrm{Supp}(F)=L$ and $\chi(F(m))=2m+1$). From $\Ext^1(\cO_L, \cO_L(-1))\cong \rH^0(N_{L/V_5}(-1))$, the line $L$ of the double line $\cO_{L^2}$ must be non-free by Proposition \ref{linesinv1}. A geometric description of double lines in $\bH(V_5,2)$ was provided by Iliev (\cite{ILi94}).
\begin{proposition}[\protect{\cite[Proposition 1.2.2]{ILi94}}]\label{double}\label{plane}
The locus of the double lines is a smooth, rational quartic curve in the Hilbert scheme $\bH(V_5,2)=\PP^4$.
\end{proposition}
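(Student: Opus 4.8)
The plan is to realize the locus of double lines as the image of the smooth conic of non-free lines under an explicit morphism into $\bH(V_5,2)=\PP^4$, and then to pin down that morphism as a rational normal quartic.

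First I would set up the bijection between double lines and non-free lines. A double line $F=\cO_{L^2}$ is determined by its reduced support $L$ together with a non-split extension class in $\Ext^1(\cO_L,\cO_L(-1))\cong\rH^0(N_{L/V_5}(-1))$, so the assignment $F\mapsto L$ is injective. Conversely, for a fixed line $L$ I would read off this extension space from Proposition \ref{linesinv1}: for a free line $N_{L/V_5}(-1)\cong\cO_L(-1)^{\oplus 2}$ has no sections, so no double line is supported on $L$, while for a non-free line $N_{L/V_5}(-1)\cong\cO_L\oplus\cO_L(-2)$ has $\rH^0=\CC$, giving a unique non-split class up to scalar and hence a unique double line $\cO_{L^2}$ over $L$. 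Thus double lines are in bijection with non-free lines, which by Lemma \ref{nonfreeinconic} form a smooth conic $C\cong\PP^1$ in $\bH(V_5,1)=\PP^2$. In particular the locus of double lines is the image of a morphism $\nu\colon C\to\PP^4$, hence rational; it remains to show that $\nu$ is an embedding of degree $4$.

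For the degree I would exploit the $\SL(2)$-symmetry of $V_5$. Writing $U$ for the standard $2$-dimensional representation, it is classical (Mukai--Umemura) that $\bH(V_5,1)=\PP(\Sym^2 U)=\PP^2$ and $\bH(V_5,2)=\PP(\Sym^4 U)=\PP^4$ equivariantly, these being the only $3$- and $5$-dimensional irreducibles, and under the former the conic of non-free lines is the image of the quadratic Veronese $\PP(U)\hookrightarrow\PP(\Sym^2 U)$, $[\ell]\mapsto[\ell^2]$. Since the formation of $\cO_{L^2}$ from $L$ is canonical, $\nu$ is $\SL(2)$-equivariant; composing with the Veronese parametrization $\PP(U)\xrightarrow{\sim}C$ yields an equivariant nonconstant morphism $\varphi\colon\PP(U)\to\PP(\Sym^4 U)$. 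A degree-$d$ such $\varphi$ is induced by an equivariant linear map $(\Sym^4 U)^{*}\to\rH^0(\PP(U),\cO(d))=\Sym^d U$, which by Schur's lemma is nonzero only for $d=4$; hence $\varphi$ is the quartic Veronese $[\ell]\mapsto[\ell^4]$, an embedding whose image is the rational normal quartic curve in $\PP^4$. Therefore the locus of double lines is a smooth rational quartic curve.

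The main obstacle is exactly this degree computation, i.e.\ pinning down $\nu$ precisely. The equivariant argument above is the cleanest route, but it rests on the representation-theoretic identification of the two Hilbert schemes. An alternative, purely local route would be to choose an explicit one-parameter family of non-free lines along $C$, write down the induced family of double lines in Pl\"ucker coordinates, and compute $\deg\nu^{*}\cO_{\PP^4}(1)$ directly together with the injectivity of $d\nu$; this avoids the symmetry input at the cost of a heavier coordinate calculation. Either way, once $\deg\nu=4$ and the immersion property are established, smoothness and rationality of the image are immediate.
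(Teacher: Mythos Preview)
The paper does not supply its own proof of this proposition: it is stated with attribution to \cite[Proposition 1.2.2]{ILi94} and is used as a black box thereafter. So there is no ``paper's proof'' to compare against; your proposal is an independent argument.

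Your argument is sound. The first paragraph, identifying double lines bijectively with non-free lines via $\Ext^1(\cO_L,\cO_L(-1))\cong\rH^0(N_{L/V_5}(-1))$, is exactly the observation the paper makes just before stating the proposition, and is correct. The second paragraph, using the $\SL(2)$-equivariance and Schur's lemma to pin down the degree, is clean and works as stated, provided one accepts the Mukai--Umemura identifications $\bH(V_5,1)\cong\PP(\Sym^2 U)$ and $\bH(V_5,2)\cong\PP(\Sym^4 U)$ as $\SL(2)$-varieties and that the non-free conic is the Veronese image of $\PP(U)$. These are standard but not proved in the present paper, so if you want a fully self-contained proof you would need to either justify them or fall back on your alternative coordinate route. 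One small point worth making explicit: injectivity of $\nu$ (hence nonconstancy of $\varphi$) follows immediately from the fact that distinct non-free lines give double lines with distinct supports, so the Schur argument is not vacuous.

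Compared with what the paper does elsewhere (e.g.\ the explicit Macaulay2-aided parametrization in the proof of Proposition~\ref{triple} for the triple-line locus), your equivariant approach is more conceptual and avoids computation; the paper's style for analogous statements leans toward the coordinate calculation you sketch as an alternative.
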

A description of the normal bundle of a conic in $V_5$ is used several times in subsequent sections.
\begin{proposition}[\protect{\cite[Proposition 2.32]{San14}}]\label{linesinv}
Let $\cU|_{V_5}$ be the restricted bundle on $V_5$ of the universal rank two sub-bundle $\cU$ on $\mathrm{Gr}(2,5)$.
The ideal sheaf $I_C$ of $[C]\in \bH(V_5,2)$ has a locally free resolution
\begin{equation}\label{eq3}
\ses{\cO_{V_5}(-1)}{\cU|_{V_5}}{I_C}.
\end{equation}
Especially, the normal bundle of the conic $C$ in $V_5$ is isomorphic to
$$
N_{C/V_5}\cong \cU^*|_C.
$$
\end{proposition}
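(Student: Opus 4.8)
The plan is to realize every conic $C\subset V_5$ as the zero scheme of a regular global section of the rank-two bundle $\cU^*|_{V_5}$, and then to read off both the resolution \eqref{eq3} and the normal bundle from the associated Koszul complex. On $\Gr(2,5)$ one has $\det\cU^*=\cO_{\Gr}(1)$, so for the rank-two bundle $\cU$ there is a canonical identification $\cU^*\cong\cU\otimes(\det\cU)^{-1}=\cU(1)$; in particular a section of $\cU^*|_{V_5}$ is the same datum as a sheaf inclusion $\cO_{V_5}(-1)\hookrightarrow\cU|_{V_5}$, which is exactly the left-hand map of \eqref{eq3}.

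First I would identify the sections. Since $\cU^*$ is globally generated with $\rH^0(\Gr(2,5),\cU^*)=V^*$ and the relevant higher cohomology of its twists vanishes, the three restriction sequences for the hyperplanes cutting out $V_5$ yield an isomorphism $\rH^0(V_5,\cU^*|_{V_5})\cong V^*$. Under this identification a linear form $\ell\in V^*$ is the section whose value at $\Lambda$ is $\ell|_\Lambda\in\Lambda^*$, so its zero scheme is
\[
Z(\ell)=\{\,\Lambda\in V_5\ :\ \Lambda\subseteq\ker\ell\,\}=V_5\cap\Gr(2,\ker\ell).
\]
As $\Gr(2,\ker\ell)\cong\Gr(2,4)$ has codimension two in $\Gr(2,5)$ and $V_5$ is a general triple hyperplane section, $Z(\ell)$ has the expected dimension one; computing its class as $c_2(\cU^*|_{V_5})$ and intersecting with the polarization gives $\deg Z(\ell)=\int_{\Gr(2,5)}\sigma_1^4\cdot\sigma_{1,1}=2$, so $Z(\ell)$ is a conic.

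Next, given an arbitrary $[C]\in\bH(V_5,2)$, I would produce a section vanishing on $C$. Restricting to $C$, the bundle $\cU^*|_C$ is globally generated of degree two, so Riemann--Roch together with the vanishing of $\rH^1$ gives $\rH^0(C,\cU^*|_C)$ of dimension four, uniformly for smooth, reducible and non-reduced conics. Hence the restriction map $V^*=\rH^0(V_5,\cU^*|_{V_5})\to\rH^0(C,\cU^*|_C)$ from a five-dimensional space has a nonzero kernel, and any $0\neq\ell$ in that kernel vanishes along $C$, i.e. $C\subseteq Z(\ell)$. Since $Z(\ell)$ is the zero scheme of a section of a rank-two bundle in the expected codimension, it is a local complete intersection, hence Cohen--Macaulay with no embedded points; as $C\subseteq Z(\ell)$ and both have Hilbert polynomial $2m+1$, we conclude $C=Z(\ell)$ scheme-theoretically. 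The section $\ell$ is therefore regular, its Koszul complex reads
\[
\ses{\cO_{V_5}(-1)}{\cU|_{V_5}}{I_C}
\]
after the identifications $(\cU^*|_{V_5})^*=\cU|_{V_5}$ and $\det(\cU^*|_{V_5})=\cO_{V_5}(1)$, and this is precisely \eqref{eq3}. The normal bundle then follows from the standard fact that the normal bundle of the zero scheme of a regular section $s$ of a vector bundle $E$ is $E|_{Z(s)}$; with $E=\cU^*|_{V_5}$ this gives $N_{C/V_5}\cong\cU^*|_C$.

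The main obstacle is the scheme-theoretic identity $C=Z(\ell)$ uniformly over the whole Hilbert scheme $\bH(V_5,2)=\PP^4$. One must exclude the possibility that $\ell$ degenerates so badly that $Z(\ell)$ drops to codimension one (a divisorial component) or acquires excess or embedded structure; this is exactly where the genericity of the three defining hyperplanes of $V_5$ is used, ensuring $\dim\bigl(V_5\cap\Gr(2,H)\bigr)=1$ for every hyperplane $H\subset V$. Once purity in codimension two is secured, the equal-Hilbert-polynomial argument closes the gap for all members of $\bH(V_5,2)$, including the double lines and line pairs; the Koszul resolution and the normal bundle computation are then formal.
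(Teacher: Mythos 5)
The paper itself contains no proof of this proposition --- it is imported wholesale from \cite[Proposition 2.32]{San14} --- so there is no internal argument to compare against; your route is in fact the standard one: exhibit every conic as the zero scheme $Z(\ell)=V_5\cap\Gr(2,\ker\ell)$ of a section of $\cU^*|_{V_5}$ and read both \eqref{eq3} and $N_{C/V_5}\cong\cU^*|_C$ off the Koszul complex. Most of your steps are solid: the identification of sections of $\cU^*|_{V_5}$ with linear forms $\ell\in V^*$, the count $\rH^0(C,\cU^*|_C)\cong\CC^4$ uniformly for smooth conics, line pairs and double lines (using $\cU^*|_L\cong\cO_L\oplus\cO_L(1)$ on every line), the resulting inclusion $C\subseteq Z(\ell)$ for some $\ell\neq0$, and the passage from ``$Z(\ell)$ is a local complete intersection of pure dimension one'' to $C=Z(\ell)$ via absence of embedded points and equality of Hilbert polynomials.

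The genuine gap is the step you yourself single out as the crux: that $\dim\bigl(V_5\cap\Gr(2,H)\bigr)=1$ for \emph{every} hyperplane $H\subset V$. Attributing this to ``genericity of the three defining hyperplanes'' does not work as stated, because the natural incidence count is exactly borderline. Inside $\Gr(7,\wedge^2V)$, which has dimension $21$, the locus of $W_7$ with $\dim(W_7\cap\wedge^2H)\geq 4$ --- the condition that makes $\Gr(2,H)\cap\PP(W_7)$ acquire a surface --- is a Schubert variety of codimension $4$; sweeping over $H\in\PP(V^*)\cong\PP^4$ produces a bad incidence variety of dimension $17+4=21=\dim\Gr(7,\wedge^2V)$. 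A dimension count therefore cannot decide whether the bad locus dominates the space of codimension-three linear sections, so genericity alone proves nothing here. What actually kills excess components is the smoothness of $V_5$: by Grothendieck--Lefschetz, $\mathrm{Pic}(V_5)=\ZZ\cdot\cO_{V_5}(1)$ with $\cO_{V_5}(1)^3=5$, so every irreducible surface in $V_5$, being an effective divisor, has degree a positive multiple of $5$. On the other hand, every irreducible component of $Z(\ell)$ has dimension $\geq1$ (locally it is cut out by two equations in a threefold), a $3$-dimensional component is impossible since $V_5\not\subseteq\Gr(2,H)$ for reasons of linear span, and any $2$-dimensional component would lie in $\Gr(2,H)\cap\Lambda$ with $\Lambda=\PP(\wedge^2H)\cap\PP^6$ a linear space, hence would be a plane or a component of a quadric hypersurface --- a surface of degree $\leq2$, a contradiction. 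Replacing your genericity sentence with this Picard-rank argument makes the dimension claim hold for all $H$ at once, after which the rest of your proof (regularity of the section, the Koszul resolution, and $N_{C/V_5}\cong\cU^*|_C$) goes through as written.
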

\subsection{Deformation theory of stable maps and sheaves}
The deformation theories of maps and sheaves are used for the analysis of the intersection part of the blow-up centers.
For the reader’s convenience, we address well-known facts about the deformation theory.
\begin{proposition}[\protect{\cite[Proposition 1.4, 1.5]{LT98}}]
Let $f:C\lr X$ be a stable map to a smooth projective variety $X$. Then the tangent space (resp. the obstruction space) of $\cM(X,d)$ at $[f]$ is given by
\begin{equation}\label{2012}
\Ext^1_C([f^*\Omega_X\to\Omega_C],\cO_C)\; (\mbox{resp. }\Ext^2_C([f^*\Omega_X\to\Omega_C],\cO_C)),
\end{equation}
where $[f^*\Omega_X\to\Omega_C]$ is a complex of sheaves concentrated at the degrees $-1$ and $0$.
\end{proposition}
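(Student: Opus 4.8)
The plan is to realize the moduli of stable maps as fibered over the Artin stack of prestable curves, and to assemble the tangent–obstruction theory of the total space from the relative theory (deforming the map) together with the base theory (deforming the source). Write $\pi:\cM(X,d)\to\mathfrak{M}$ for the morphism forgetting $f$ and remembering only the genus-$0$ prestable source $C$, where $\mathfrak{M}$ is the smooth Artin stack of prestable curves. A point of $\cM(X,d)$ is a pair $(C,f)$, and a first-order deformation over $\spec\CC[\epsilon]/(\epsilon^2)$ is a simultaneous flat deformation of $C$ together with an extension of $f$; equivalently, it is a deformation of $C$ regarded as an $X$-scheme via $f$. This reformulation is what lets Illusie's cotangent-complex formalism (or equivalently the Behrend–Fantechi obstruction theory) be applied directly.

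First I would treat the relative problem, fixing $C$ and deforming only $f:C\to X$. Since $X$ is smooth, $f^*\Omega_X$ is locally free and the relative deformations and obstructions are $\Ext^i_C(f^*\Omega_X,\cO_C)=H^i(C,f^*T_X)$ for $i=0,1$, so the relative obstruction theory of $\pi$ is governed by $f^*\Omega_X$. Separately, deformations of the source inside $\mathfrak{M}$ are recorded by $\Ext^1_C(\Omega_C,\cO_C)$, where I use that each node contributes a smoothing direction through $H^0(C,\cExt^1(\Omega_C,\cO_C))$; crucially, because $\mathfrak{M}$ is smooth these directions are unobstructed, and a local-to-global spectral sequence together with $\cExt^{\geq 2}(\Omega_C,\cO_C)=0$ for a nodal curve gives $\Ext^2_C(\Omega_C,\cO_C)=0$.

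Next I would splice the two pieces. The two-term complex $E^\bullet=[f^*\Omega_X\to\Omega_C]$ in degrees $-1,0$, with differential the natural pullback on Kähler differentials (dual to $df:T_C\to f^*T_X$), is the cone of $f^*\Omega_X\to\Omega_C$ and sits in a short exact sequence of complexes $0\to\Omega_C\to E^\bullet\to f^*\Omega_X[1]\to 0$. Applying $R\Hom_C(-,\cO_C)$ produces the long exact sequence
\begin{equation*}
H^0(C,f^*T_X)\to\Ext^1_C(E^\bullet,\cO_C)\to\Ext^1_C(\Omega_C,\cO_C)\to H^1(C,f^*T_X)\to\Ext^2_C(E^\bullet,\cO_C)\to 0,
\end{equation*}
which exhibits $\Ext^1_C(E^\bullet,\cO_C)$ as the extension of the curve deformations $\Ext^1_C(\Omega_C,\cO_C)$ by the map deformations $H^0(C,f^*T_X)$ — precisely the tangent space of the fibration $\pi$ — and $\Ext^2_C(E^\bullet,\cO_C)$ as the cokernel carrying the obstructions. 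Matching this against the standard tangent–obstruction sequence of $\pi$ identifies $\Ext^1_C(E^\bullet,\cO_C)$ and $\Ext^2_C(E^\bullet,\cO_C)$ with the tangent and obstruction spaces of $\cM(X,d)$ at $[f]$.

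The hard part will be the rigorous identification of $E^\bullet$ with the relevant part of the cotangent complex, that is, verifying that replacing the full relative object $L_{C/X}$ by the naive truncation $[f^*\Omega_X\to\Omega_C]$ does not alter $\Ext^1$ and $\Ext^2$. The two differ only in the contribution of $H^{-1}(L_C)$ supported at the nodes of $C$, and one must check that, because $\mathfrak{M}$ is smooth, this discrepancy is absorbed into the unobstructed node-smoothing directions and does not contaminate the degree-$1$ and degree-$2$ hyperext groups. Concretely, I would invoke the triangle $f^*L_X\to L_C\to L_{C/X}$ with $L_X=\Omega_X$, the vanishing $\cExt^{\geq 2}(\Omega_C,\cO_C)=0$, and the compatibility of the Illusie obstruction theory with the fibration $\pi$, to conclude that the truncation $\tau_{\geq -1}$ yields exactly $E^\bullet$ and that its hyperext in degrees $1$ and $2$ computes the asserted deformation and obstruction spaces.
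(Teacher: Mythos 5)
The paper does not prove this proposition at all: it is imported verbatim from \cite[Propositions 1.4, 1.5]{LT98} as a known fact, so there is no internal argument to measure you against. Your sketch is essentially the standard proof of that cited result (Li--Tian, or Behrend--Fantechi's reformulation): fiber the stable map space over the smooth Artin stack of prestable curves, govern the fibers by $f^*\Omega_X$, and splice with the unobstructed deformations of the nodal source via the triangle $\Omega_C\to E^\bullet\to f^*\Omega_X[1]$. The skeleton is correct.

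One substantive correction, which concerns precisely the step you flag as ``the hard part'': there is no truncation issue, because for a nodal curve $C$ one has $L_C\simeq\Omega_C$ on the nose. Indeed, embedding $C$ in a smooth variety $M$, the l.c.i.\ presentation $L_C=[I/I^2\to\Omega_M|_C]$ has \emph{injective} differential: locally at a node, $C=\{xy=0\}\subset\AA^2$ and $g\cdot(x\,dy+y\,dx)=0$ forces $gx=gy=0$ in $\cO_C$, hence $g\in(x)\cap(y)=0$. So $\rH^{-1}(L_C)=0$ (the node contributes torsion to $\Omega_C$ and to $\cExt^1(\Omega_C,\cO_C)$, not to $\rH^{-1}(L_C)$), and consequently $L_{C/X}=\mathrm{Cone}(f^*\Omega_X\to L_C)\simeq[f^*\Omega_X\to\Omega_C]=E^\bullet$ exactly, with no $\tau_{\geq-1}$ needed; Illusie's theory for the morphism $f$ with $X$ fixed then immediately identifies $\Ext^1_C(E^\bullet,\cO_C)$ and $\Ext^2_C(E^\bullet,\cO_C)$ with the deformation and obstruction spaces. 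A cosmetic point: your long exact sequence does not exhibit $\Ext^1_C(E^\bullet,\cO_C)$ as an extension of all of $\Ext^1_C(\Omega_C,\cO_C)$ by $\rH^0(f^*T_X)$; the full sequence is
\[
0\to\Hom(\Omega_C,\cO_C)\to \rH^0(f^*T_X)\to\Ext^1_C(E^\bullet,\cO_C)\to\Ext^1_C(\Omega_C,\cO_C)\to \rH^1(f^*T_X)\to\Ext^2_C(E^\bullet,\cO_C)\to0,
\]
where injectivity of the first arrow is exactly the stability condition $|\mathrm{Aut}(f)|<\infty$, and the image of $\Ext^1_C(E^\bullet,\cO_C)$ is only the kernel of $\Ext^1_C(\Omega_C,\cO_C)\to \rH^1(f^*T_X)$.
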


\begin{lemma}\label{spectralseqmap}
Let $Y$ be a locally complete intersection of a smooth projective variety
$X$. Let $f:C \lr Y \subset X$ be a stable map that factors through
$Y$. Then there exists an exact sequence:
\[\begin{split}
0 \lr& \Ext^1 ([f^*\Omega_Y \lr \Omega_C] ,\cO_C) \lr \Ext^1
([f^*\Omega_X \lr \Omega_C] ,\cO_C) \lr \rH^0(f^* N_{Y/X})
\\
\lr& \Ext^2 ([f^*\Omega_Y \lr \Omega_C] ,\cO_C) \lr \Ext^2
([f^*\Omega_X \lr \Omega_C] ,\cO_C)\lr \rH^1(f^* N_{Y/X})\lr0,
\end{split}\]
where $N_{Y/X}$ is the normal bundle of $Y$ in $X$.
\end{lemma}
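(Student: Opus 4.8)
The plan is to realize the six-term sequence as the long exact sequence of hyper-$\Ext$ groups attached to a short exact sequence of two-term complexes built from the conormal sequence of $Y\subset X$.

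First I would record the conormal sequence of the local complete intersection $\iota:Y\hookrightarrow X$,
\[\ses{N_{Y/X}^*}{\Omega_X|_Y}{\Omega_Y}\]
in which $N_{Y/X}^*$ (the conormal bundle, dual of $N_{Y/X}$) is locally free of rank $\codim_X Y$ precisely because $Y$ is a local complete intersection. Since $f$ factors as $C\lr Y\hookrightarrow X$, pulling back by $f$ gives a right exact sequence $f^*N_{Y/X}^*\mapright{\beta} f^*\Omega_X\mapright{g} f^*\Omega_Y\lr 0$ on $C$, and the cotangent morphism $f^*\Omega_X\lr\Omega_C$ of $f$ factors as $f^*\Omega_X\mapright{g} f^*\Omega_Y\lr\Omega_C$.

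Granting that $\beta$ is injective (discussed below), so that $\ker g=f^*N_{Y/X}^*$, the identity $g\circ\beta=0$ shows the composite $f^*N_{Y/X}^*\lr f^*\Omega_X\lr\Omega_C$ vanishes; hence $f^*N_{Y/X}^*$, placed in degree $-1$, sits inside $[f^*\Omega_X\lr\Omega_C]$ as a subcomplex, and I obtain a short exact sequence of complexes
\[0\lr f^*N_{Y/X}^*[1]\lr [f^*\Omega_X\lr\Omega_C]\lr [f^*\Omega_Y\lr\Omega_C]\lr 0,\]
each term concentrated in degrees $-1,0$ with $\Omega_C$ in degree $0$. Applying $R\Hom_C(-,\cO_C)$ and taking the resulting long exact sequence of hyper-$\Ext$ groups $\Ext_C^\bullet(-,\cO_C)$ produces the desired sequence once the outer terms are identified. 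Since $N_{Y/X}^*$ is locally free, $\cExt^{\geq 1}_C(f^*N_{Y/X}^*,\cO_C)=0$ and $(f^*N_{Y/X}^*)^\vee=f^*N_{Y/X}$, whence $\Ext^i_C(f^*N_{Y/X}^*[1],\cO_C)=\Ext^{i-1}_C(f^*N_{Y/X}^*,\cO_C)=\rH^{i-1}(C,f^*N_{Y/X})$. Because $\dim C=1$, the vanishing $\rH^{-1}(C,f^*N_{Y/X})=0$ forces the injectivity at the left end, while $\rH^{\geq 2}=0$ together with $\Ext^{\geq 3}_C([f^*\Omega_Y\lr\Omega_C],\cO_C)=0$ yields the surjection onto $\rH^1(C,f^*N_{Y/X})$ at the right end; this gives exactly the stated six-term sequence.

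The one genuine point, and where I expect all the care to be needed rather than in the formal hyper-$\Ext$ bookkeeping, is the left exactness after pullback, i.e. the injectivity of $\beta$. When $Y$ is smooth, as in every application in this paper, the conormal sequence is a short exact sequence of vector bundles and is therefore locally split, so $f^*$ automatically preserves its exactness and $\beta$ is injective. For a general local complete intersection $Y$ one instead invokes the regular embedding identification $L_{Y/X}\simeq N_{Y/X}^*[1]$ of cotangent complexes: since $N_{Y/X}^*$ is locally free one has $Lf^*L_{Y/X}=f^*N_{Y/X}^*[1]$, and the transitivity triangle $Lf^*L_{Y/X}\lr L_{C/X}\lr L_{C/Y}$ reproduces the short exact sequence of complexes above, the truncation being harmless in the range $\Ext^1,\Ext^2$ relevant to a curve.
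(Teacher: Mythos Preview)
Your proof is correct and follows essentially the same route as the paper. The paper obtains the distinguished triangle
\[
f^*N_{Y/X}^*[1]\lr [f^*\Omega_X\lr\Omega_C]\lr [f^*\Omega_Y\lr\Omega_C]\mapright{[1]}
\]
by invoking the octahedron axiom for the composition $f^*\Omega_X\to f^*\Omega_Y\to\Omega_C$ and then applies $\Hom(-,\cO_C)$; you produce the same triangle by exhibiting it directly as a short exact sequence of two-term complexes coming from the pulled-back conormal sequence, and then take the long exact sequence of hyper-$\Ext$'s. The only difference is packaging: the octahedron axiom identifies the cone of $f^*\Omega_X\to f^*\Omega_Y$ with $f^*N_{Y/X}^*[1]$ implicitly, whereas you make this explicit via the injectivity of $\beta$. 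Your added care about the left exactness after pullback (automatic when $Y$ is smooth, or via $L_{Y/X}\simeq N_{Y/X}^*[1]$ in general) and about the vanishing at the two ends of the six-term sequence is more detailed than the paper's one-line appeal, but the underlying argument is the same.
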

\begin{proof}
By applying the octahedron axiom (\cite[Proposition 1.4.4]{KS90}) for the derived category of sheaf
complexes on $C$ to the composition
$$f^*\Omega_{X}\lr f^*\Omega_{Y}\lr  \Omega_C,$$
we obtain a distinguished triangle
\[
f^*N_{Y/X}^*[1]\lr [f^*\Omega_{X}\lr \Omega_C]\lr
[f^*\Omega_{Y}\to \Omega_C] \mapright{[1]}.
\]
By taking the functor $\Hom(-,\cO_C)$ in this sequence, we obtain the result.
\end{proof}
\begin{proposition}[\protect{\cite[Proposition 2.A.11]{HuLe10}}] \label{defsheaves}
Let $F$ be a stable sheaf on a smooth projective variety $X$. Then the tangent space (resp. the obstruction space) of $\cS (X,d)$ at $[F]$ is given by
\begin{equation}
\Ext^1_X(F,F)\; (\mbox{resp. }\Ext^2_X(F,F)).
\end{equation}
\end{proposition}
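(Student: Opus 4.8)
The plan is to read off the tangent and obstruction spaces from the concrete construction of $\cS(X,d)$ as a GIT quotient of a Quot scheme, which is also the route taken in \cite{HuLe10}. Conceptually, a first-order deformation of $F$ is a coherent sheaf $\widetilde{F}$ on $X\times\spec\CC[\epsilon]/(\epsilon^2)$, flat over $\CC[\epsilon]/(\epsilon^2)$, restricting to $F$; flatness forces a short exact sequence $\ses{F}{\widetilde{F}}{F}$ of $\cO_X$-modules, so isomorphism classes of such deformations are classified by $\Ext^1_X(F,F)$, and the obstruction to extending a deformation across a small extension of Artinian bases is a class in $\Ext^2_X(F,F)$. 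To match this with the tangent and obstruction spaces of the moduli space itself I will transport the (manifestly computable) deformation theory of the Quot scheme through the quotient.

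Fix $m\gg 0$ so that $F(m)$ is globally generated with $\rH^i(F(m))=0$ for all $i>0$, set $H=\rH^0(F(m))$ and $\cG=H\otimes\cO_X(-m)$, and let $\rho:\cG\twoheadrightarrow F$ be the evaluation map with kernel $K$, giving $\ses{K}{\cG}{F}$. Then, near $[F]$, the space $\cS(X,d)$ is the GIT quotient $R\git\GL(H)$ of a $\GL(H)$-invariant open subscheme $R$ of $\mathrm{Quot}(\cG,P)$, and $[\rho]\in R$ is a stable point precisely because $F$ is a stable sheaf. The deformation theory of the Quot scheme identifies the tangent space of $R$ at $[\rho]$ with $\Hom(K,F)$ and an obstruction space with $\Ext^1(K,F)$. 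Applying $\Hom(-,F)$ to $\ses{K}{\cG}{F}$ and using $\Ext^i(\cG,F)\cong H^\vee\otimes\rH^i(F(m))=0$ for $i>0$ yields the exact sequence
\[
0\to\Hom(F,F)\to\Hom(\cG,F)\to\Hom(K,F)\to\Ext^1(F,F)\to 0
\]
together with an isomorphism $\Ext^1(K,F)\cong\Ext^2(F,F)$.

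Finally I descend these to the quotient. The differential at $[\rho]$ of the $\GL(H)$-action is exactly the map $\mathrm{Lie}\,\GL(H)=\mathrm{End}(H)=\Hom(\cG,F)\to\Hom(K,F)$ occurring above, so the tangent space to the orbit is the image of $\Hom(\cG,F)$. Since $F$ is stable it is simple, $\Hom(F,F)=\CC$, hence the stabilizer of $[\rho]$ is the scalar subgroup and $\mathrm{PGL}(H)$ acts freely near $[\rho]$; thus $R\to\cS(X,d)$ is a principal $\mathrm{PGL}(H)$-bundle over a neighborhood of $[F]$. Quotienting therefore kills precisely the orbit directions and leaves the obstructions unchanged, so
\[
T_{[F]}\cS(X,d)=\Hom(K,F)/\mathrm{im}\,\Hom(\cG,F)\cong\Ext^1_X(F,F),
\]
while the obstruction space is $\Ext^1(K,F)\cong\Ext^2_X(F,F)$, as claimed.

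The main obstacle is the last step: justifying that the obstruction theory of the base $\cS(X,d)$ is the $\GL(H)$-quotient of that of $R$, i.e. that the group directions are unobstructed and contribute only to the tangent space. This is exactly where stability enters in an essential way, since simplicity of $F$ guarantees that $R\to\cS(X,d)$ is smooth (a principal bundle) near $[F]$, so that the deformation-obstruction theory of the total space descends to the base with the tangent space modified by the smooth orbit directions and the obstruction space untouched.
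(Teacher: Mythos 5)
Your proof is correct and follows essentially the same route as the paper's source: the paper gives no independent argument for this proposition, citing \cite[Prop.~2.A.11]{HuLe10}, whose proof is exactly this Quot-scheme/GIT descent — tangent space of the Quot scheme $\Hom(K,F)$, obstruction space $\Ext^1(K,F)$, the four-term exact sequence from $\Hom(-,F)$ applied to $\ses{K}{\cG}{F}$, and descent along the quotient at a stable (hence simple) point. The one step you assert rather than prove, namely that freeness of the $\mathrm{PGL}(H)$-action on the stable locus makes $R\to\cS(X,d)$ a principal bundle near $[F]$ (this needs Luna's \'etale slice theorem, not just triviality of stabilizers), is precisely \cite[Cor.~4.3.5]{HuLe10}, so your argument matches the cited proof in both structure and substance.
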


\begin{lemma}
Let $Y \stackrel{i}{\subset} X$ be a smooth, closed subvariety of the smooth variety $X$.
If $F$ and $G \in \mathrm{Coh}(Y)$, then there is an exact sequence
\begin{equation}\label{thomas2}
\begin{split}
0 \to \Ext^1_{\cO_Y}(F, G) \to \Ext^1_{\cO_X}(i_*F, i_*G) &\to \Hom_{\cO_Y}(F,G\otimes N_{Y/X}) \\
&\to \Ext^2_{\cO_Y}(F, G) \to \Ext^2_{\cO_X}(i_*F, i_*G).
\end{split}
\end{equation}
\end{lemma}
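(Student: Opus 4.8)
The plan is to exhibit the five-term sequence \eqref{thomas2} as the exact sequence of terms of low degree of a convergent first-quadrant spectral sequence that compares the two $\Ext$-theories, so the main task is to construct that spectral sequence. I would work in the derived category and use the adjunction $Li^{*}\dashv Ri_{*}$. Since the immersion $i$ is closed, $i_{*}$ is exact and $Ri_{*}=i_{*}$, so adjunction gives a natural isomorphism
\[
R\mathrm{Hom}_{\cO_X}(i_{*}F, i_{*}G)\;\cong\; R\mathrm{Hom}_{\cO_Y}(Li^{*}i_{*}F, G),
\]
and taking cohomology identifies $\Ext^{n}_{\cO_X}(i_{*}F,i_{*}G)$ with $\Ext^{n}_{\cO_Y}(Li^{*}i_{*}F, G)$. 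Thus it suffices to understand the complex $Li^{*}i_{*}F$ together with the hyperext spectral sequence built from its cohomology sheaves.

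The next step is the computation of those cohomology sheaves. Because $X$ is smooth and $Y\subset X$ is a smooth closed subvariety, the inclusion is a regular (locally complete intersection) embedding, so locally $Y$ is cut out by a regular sequence $f_{1},\dots,f_{c}$ and $\cO_Y$ is resolved over $\cO_X$ by the corresponding Koszul complex. The key observation is that $i_{*}F$ is annihilated by each $f_{j}$, so after tensoring the Koszul complex with $i_{*}F$ all differentials vanish; this yields
\[
\mathcal{H}^{-q}(Li^{*}i_{*}F)\;\cong\;\mathcal{T}or^{\cO_X}_{q}(i_{*}F,\cO_Y)\;\cong\; F\otimes_{\cO_Y}\wedge^{q}N^{*}_{Y/X},
\]
a complex with cohomology concentrated in degrees $-q\le 0$ and bounded by the codimension $c$.

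I would then feed $Li^{*}i_{*}F$ into the second hyperext spectral sequence of $R\mathrm{Hom}_{\cO_Y}(-,G)$. Using that the conormal powers $\wedge^{q}N^{*}_{Y/X}$ are locally free, so that $\Ext^{p}_{\cO_Y}(F\otimes\wedge^{q}N^{*}_{Y/X},G)\cong\Ext^{p}_{\cO_Y}(F,G\otimes\wedge^{q}N_{Y/X})$, this produces a convergent first-quadrant spectral sequence of global $\Ext$-groups
\[
E_{2}^{p,q}=\Ext^{p}_{\cO_Y}(F, G\otimes\wedge^{q}N_{Y/X})\;\Longrightarrow\;\Ext^{p+q}_{\cO_X}(i_{*}F,i_{*}G).
\]
Writing $H^{n}=\Ext^{n}_{\cO_X}(i_{*}F,i_{*}G)$ and using $\wedge^{0}N_{Y/X}=\cO_Y$, $\wedge^{1}N_{Y/X}=N_{Y/X}$, the relevant entries are $E_{2}^{p,0}=\Ext^{p}_{\cO_Y}(F,G)$ and $E_{2}^{0,1}=\Hom_{\cO_Y}(F,G\otimes N_{Y/X})$. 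The standard exact sequence of terms of low degree of such a spectral sequence,
\[
0\to E_{2}^{1,0}\to H^{1}\to E_{2}^{0,1}\xrightarrow{d_{2}}E_{2}^{2,0}\to H^{2},
\]
becomes exactly \eqref{thomas2} after substituting these identifications, its edge maps and the transgression $d_{2}$ supplying the maps in the statement.

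I expect the only genuine work to be the $\mathcal{T}or$ computation and the identification of the edge maps: one must verify that the embedding is regular and carry out the Koszul argument compatibly with the $\cO_Y$-module structure, and then check that the abstract edge/transgression maps of the spectral sequence coincide with the geometrically natural maps one wishes to name. Once these are in place, the passage through derived adjunction and the extraction of the five-term sequence are formal, so this is where I would concentrate the detailed argument.
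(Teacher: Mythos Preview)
Your proposal is correct and takes essentially the same route as the paper: the paper's proof is a one-line citation to the base change spectral sequence (\cite[Theorem 12.1]{mcc01}), and what you have written is precisely a derivation of that spectral sequence via derived adjunction and the Koszul computation of $Li^{*}i_{*}F$, followed by extraction of the five-term low-degree exact sequence. Your version is more explicit, but the underlying mechanism is identical.
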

\begin{proof}
This is the base change spectral sequence in \cite[Theorem 12.1]{mcc01}.
\end{proof}
\section{Non-reduced cubic curves in $\bS(V_5,3)$}\label{sect:3}
From now on, we will often write $\cH(X)$ or $\bH(X)$ (resp. $\cS(X)$ or $\bS(X)$) instead of $\cH(X,d)$ or $\bH(X,d)$ (resp. $\cS(X,d)$ or $\bS(X,d)$) when the meaning is clear from the context.
\begin{proposition}\label{hilbsimp}
The canonical correspondence
$$\bH(V_5)\rightarrow \bS(V_5),\; I_C\mapsto \cO_C$$
is an isomorphism. Hence, $\bS(V_5)=\mathrm{Gr}(2,5)$.
\end{proposition}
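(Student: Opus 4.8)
The plan is to realize the set-theoretic correspondence $I_C \mapsto \cO_C$ as a morphism $\phi\colon \cH(V_5)\to\cS(V_5)$, to check that it is bijective on closed points, and then to build an explicit inverse using the fact that $\cS(V_5)$ is a fine moduli space. Once $\cH(V_5)\cong\cS(V_5)$ is established, the identification $\cH(V_5)\cong\Gr(2,5)$ of \cite[Proposition 2.46]{San14} yields $\cS(V_5)\cong\Gr(2,5)$; since $\Gr(2,5)$ is irreducible and the smooth twisted cubics are dense in $\cH(V_5)$, we also get $\bH(V_5)=\cH(V_5)$ and $\bS(V_5)=\cS(V_5)$, which gives the stated conclusion.

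First I would record that both moduli spaces parameterize only structure sheaves of Cohen--Macaulay curves. On the Hilbert side this is the observation recalled above: a non-CM rational cubic in $\PP^r$ is a planar cubic together with an extra point, but a planar cubic cannot lie on $V_5$ — its plane is either contained in $V_5$ (impossible, since $V_5$ contains no $2$-plane) or meets $V_5$ in a curve cut out by quadrics, hence of degree $\le 2$, which cannot contain a cubic. Granting this, the universal subscheme $\mathcal{C}\subset\cH(V_5)\times V_5$ is a flat family whose fibres $\cO_C$ are stable of Hilbert polynomial $3m+1$ by \cite[Lemma 2.1]{CK11}, so $\cO_{\mathcal{C}}$ induces the morphism $\phi$ by the moduli property of $\cS(V_5)$.

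For the inverse I would use that $\gcd(3,1)=1$, so that there are no strictly semistable sheaves and $\cS(V_5)$ carries a universal sheaf $\mathbb{F}$ on $\cS(V_5)\times V_5$. The essential input is the sheaf-theoretic counterpart of the statement above: every stable $F$ with Hilbert polynomial $3m+1$ on $V_5$ is the structure sheaf $\cO_C$ of a connected CM curve, proved by the same no-plane argument (cf. \cite[Lemma 4.14]{CHK12}). Each such $F$ satisfies $h^0(F)=1$, so by cohomology and base change $p_*\mathbb{F}$ is a line bundle on $\cS(V_5)$; replacing $\mathbb{F}$ by its twist by $p^*(p_*\mathbb{F})^{-1}$, the canonical evaluation becomes a surjection $\cO_{\cS(V_5)\times V_5}\twoheadrightarrow\mathbb{F}$ that restricts on each fibre to $\cO_{V_5}\twoheadrightarrow\cO_C$. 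Its kernel sits in a short exact sequence of sheaves flat over $\cS(V_5)$ with flat quotient, hence is itself flat and defines a family of ideal sheaves, giving a morphism $\psi\colon\cS(V_5)\to\cH(V_5)$. By construction $\phi$ and $\psi$ invert one another, so $\cH(V_5)\cong\cS(V_5)$.

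The main obstacle is the sheaf-theoretic claim in the previous paragraph: that no \emph{exotic} stable sheaves occur, i.e.\ that every stable $F$ of Hilbert polynomial $3m+1$ is a structure sheaf of a CM curve rather than, say, a sheaf whose support scheme carries an embedded or isolated point. This is precisely where the hypothesis $\deg I_{V_5}\le 2$ enters: an embedded point or a non-CM support would again force a plane section of $V_5$ to contain a cubic, which the quadratic equations, together with the absence of $2$-planes on $V_5$, forbid. The remaining points — that the fibrewise section globalizes to a surjection onto $\mathbb{F}$ and that the resulting family of ideals is flat — are routine once this pointwise classification is in hand.
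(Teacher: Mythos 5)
Your proposal is correct and takes essentially the same route as the paper: both arguments eliminate the non-CM objects on the Hilbert and Simpson sides simultaneously using the Piene--Schlessinger classification \cite{PS85} together with the fact that $V_5$ is cut out by quadrics (cf. \cite[Lemma 4.14]{CHK12}), invoke stability of $\cO_C$ for CM curves via \cite[Lemma 2.1]{CK11}, and then conclude with Sanna's identification $\cH(V_5,3)\cong\Gr(2,5)$ from \cite{San14}. The only difference is that you make explicit the scheme-theoretic construction of the two mutually inverse morphisms (universal families, fineness of $\cS(V_5)$ from $\gcd(3,1)=1$, cohomology and base change), which the paper compresses into the phrase ``canonical correspondence.''
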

\begin{proof}
The entire Hilbert scheme $\cH(V_5)$ is the Hilbert compactification $\cH(V_5)=\bH(V_5)(\cong \Gr(2,5))$ by \cite[Corollary 1.39, Proposition 2.46]{San14} which parameterizes the ideal sheaf of Cohen-Macaulay (CM) curves with Hilbert polynomial $3m+1$.
On the other hand, the Simpson space $\cS(\PP^r)$ consists of two irreducible components: the space of structure sheaves of CM-curves and the space of non-split extensions of a planar cubic curve by a point on the curve (\cite{FT04} and \cite[Lemma 2.1]{CK11}). Let $F$ be a stable sheaf parameterized by $\cS(V_5)$ such that $F$ is supported on a planar cubic curve $C$. Then the curve $C$ is contained in $V_5$ only if the linear spanning $\langle C\rangle$ of $C$ is contained in $V_5$ (cf. \cite[Lemma 4.14]{CHK12}). Since $V_5$ is a Fano threefold, if it includes a plane, it can be contracted by an extremal contraction (which should be divisorial) (\cite[Theorem 2.5]{BSW90}). But this is not possible as $V_5$ is of Picard rank one. Therefore, the space $\cS(V_5)$ parameterizes structure sheaves of CM-curves and thus the canonical correspondence is an isomorphism. 
\end{proof}
In this section, we describe the stable sheaves parameterized by the complement $\bS(V_5)\setminus R(V_5)$; especially, we focus on the non-reduced curves.
\subsection{Triple lines in $V_5$}\label{sec:3.1}
\begin{lemma}\label{typeofdeg1}
Let $F\in \bS(V_5)$ be a stable sheaf supported on a line $\mathrm{red}\mathrm{Supp}(F)=L$ in $V_5$. Then,
\begin{enumerate}
\item $L$ is non-free and
\item $F$ is isomorphic to $$F\cong \cO_{L^3}$$ the structure sheaf of the unique triple line $L^3$ lying on a quadric cone.
\end{enumerate}
\end{lemma}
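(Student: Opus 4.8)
The plan is to reduce the statement to a classification of Cohen--Macaulay curves and then to a local computation along $L$. By Proposition \ref{hilbsimp} the stable sheaf $F$ is the structure sheaf $\cO_C$ of a CM curve $C\subset V_5$ with $\chi(\cO_C(m))=3m+1$, hence with arithmetic genus $p_a(C)=0$, and with $\mathrm{red}\,\mathrm{Supp}(\cO_C)=L$. It therefore suffices to classify the multiplicity-$3$ CM structures of arithmetic genus $0$ supported on the line $L$, and to show that each one forces $L$ to be non-free.

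I would analyze $\cO_C$ through the $I_L$-adic filtration $\cF_i=I_L^i\cO_C$, whose graded pieces $\cF_i/\cF_{i+1}$ are coherent sheaves on $L\cong\PP^1$. The transverse fibre of $C$ at the generic point of $L$ is a length-$3$ quotient of a $2$-dimensional regular local ring, so it is either curvilinear or isomorphic to $\mathfrak m^2$. In the latter case $C$ would be the first infinitesimal neighbourhood $I_C=I_L^2$, whose associated graded is $\cO_L\oplus N^*_{L/V_5}$ and which satisfies $\chi(\cO_C(m))=3m+3$; this contradicts $p_a(C)=0$, so $C$ is generically curvilinear. Consequently $\cF_1/\cF_2$ has generic rank $1$, and its torsion-free quotient is a line-bundle quotient $\cM$ of $N^*_{L/V_5}$ defining a ribbon direction along $L$. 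The subtle point, which I expect to require an explicit coordinate computation of $I_C$ transverse to $L$, is that $\cF_1/\cF_2$ need not be locally free: it should acquire torsion exactly at the point of $L$ lying at the vertex of the quadric cone.

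To establish (1) I would pass from $C$ to the induced double structure $C_2\subset C$ (with $\cO_{C_2}=\cO_C/\cF_2$) and, after dividing out torsion, extract a genuine non-split double-line sheaf $\bar F$ on $L$ fitting into $\ses{\cM}{\bar F}{\cO_L}$, necessarily with $\cM\cong\cO_L(-1)$. Since such a non-split extension exists in $V_5$, the discussion preceding Proposition \ref{double}, via the identification $\Ext^1_{V_5}(\cO_L,\cO_L(-1))\cong H^0(L,N_{L/V_5}(-1))$, shows this group is nonzero; by Proposition \ref{linesinv1} it vanishes when $L$ is free, so $L$ must be non-free, which is (1).

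For (2) I would realize $\cO_C$ as a non-split extension $\ses{\cO_L(-1)}{\cO_C}{\cO_{L^2}}$, where $\cO_{L^2}$ is the planar double line on the non-free line $L$, and then prove that the data are rigid inside $V_5$: the plane of $L^2$ is determined by $L$, and $\dim\Ext^1_{V_5}(\cO_{L^2},\cO_L(-1))=1$, so the non-split (equivalently stable) extension is unique. Matching it with the explicit local model identifies $\cO_C$ with the structure sheaf of the triple line $L^3$ lying on a quadric cone in the sense of \cite{EH82}. The main obstacle is precisely this local analysis at the vertex: there the multiplicity structure is not quasi-primitive (the associated graded carries torsion), so one cannot simply read off line-bundle quotients, and both the Cohen--Macaulayness and the uniqueness, i.e. the one-dimensionality of the relevant $\Ext^1$, have to be verified by an explicit coordinate calculation in $V_5$.
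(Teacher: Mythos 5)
Your plan shares the paper's endgame (reduce to CM curves via Proposition \ref{hilbsimp}, exhibit $F$ as the unique non-split extension $\ses{\cO_L(-1)}{F}{\cO_{L^2}}$ of the planar double line, and deduce non-freeness from $\rH^0(N_{L/V_5}(-1))\neq 0$), but it takes a genuinely different route to the classification: you propose to rederive it from the $I_L$-adic filtration, whereas the paper simply quotes the list of \cite{EH82} (the two types recorded in Lemma \ref{bcenter}) and kills the first type because it would require $\dim\Ext^1_{V_5}(\cO_L,\cO_L(-1))\geq 2$, while $\Ext^1_{V_5}(\cO_L,\cO_L(-1))\cong\rH^0(N_{L/V_5}(-1))$ has dimension at most $1$ by Proposition \ref{linesinv1}. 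The difficulty is that the two steps you defer are exactly the content of the lemma, and your proposed way of filling the first one cannot work: the assertion ``necessarily $\cM\cong\cO_L(-1)$'' is not something one can verify by ``an explicit coordinate computation of $I_C$,'' because $F$ is an \emph{arbitrary} stable sheaf supported on $L$ --- there is no specific ideal to compute with; coordinates can exhibit one example (this is what the paper's Macaulay2 computation does, in the separate Proposition \ref{triple}), but never exclude all other candidates. Note also that both free and non-free lines admit line-bundle quotients of $N^*_{L/V_5}$ of non-negative degree, so until $\deg\cM=-1$ is pinned down, nothing in your argument rules out free lines, i.e.\ part (1) is untouched. The gap can be closed abstractly rather than by coordinates: since $\cO_C$ is CM, $\cF_2=I_L^2\cO_C$ is pure, killed by $I_L$, of generic rank one, hence a line bundle on $L$; the multiplication map $\cF_1\otimes\cF_1\to\cF_2$ descends to a surjection $\cM^{\otimes 2}\twoheadrightarrow\cF_2$ (the torsion of $\cF_1/\cF_2$ must map to zero in the torsion-free sheaf $\cF_2$), so $\cF_2\cong\cM^{\otimes 2}$; comparing Hilbert polynomials along the filtration gives $3\deg\cM+\mathrm{length}(T_1)=-2$, where $T_1$ is the torsion of $\cF_1/\cF_2$, and stability of $\cO_C$ applied to the pure multiplicity-two quotient $\bar F$ forces $\deg\cM\geq-1$; the only solution is $\deg\cM=-1$ and $\mathrm{length}(T_1)=1$ (your vertex point). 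Only after this does your $\Ext^1$-identification argument for (1) apply; similarly the non-splitness of $\ses{\cM}{\bar F}{\cO_L}$, which you assert, follows from stability (a splitting would produce the destabilizing quotient $\cO_L(-1)$ of $\cO_C$).

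The second deferral --- the uniqueness statement $\dim\Ext^1_{V_5}(\cO_{L^2},\cO_L(-1))=1$ needed for (2) --- you again leave to a coordinate calculation. This one is at least feasible in principle (all non-free lines are projectively equivalent), but the paper proves it by a short global argument you should adopt: from $\ses{I_{L^2}}{\cO_{V_5}}{\cO_{L^2}}$ and $\rH^0(\cO_L(-1))=\rH^1(\cO_L(-1))=0$ one gets $\Ext^1(\cO_{L^2},\cO_L(-1))\cong\Hom(I_{L^2},\cO_L(-1))$; the resolution $\ses{\cO_{V_5}(-1)}{\cU|_{V_5}}{I_{L^2}}$ of Proposition \ref{linesinv} bounds this above by $\dim\Hom(\cU|_{V_5},\cO_L(-1))=\dim\rH^0(\cU|_L)=\dim\rH^0(\cO_L\oplus\cO_L(-1))=1$, and the lower bound $\geq 1$ follows by applying $\Hom(I_{L^2},-)$ to $\ses{\cO_L(-1)}{\cO_{L^2}}{\cO_L}$, using that $\Hom(I_{L^2},\cO_{L^2})\cong\CC^4$ is the tangent space of $\bH(V_5,2)=\PP^4$ at $[L^2]$ and that $\dim\Hom(I_{L^2},\cO_L)\leq 3$. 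In summary: your skeleton is sound and would give an alternative, self-contained proof, but as written the decisive verifications are missing, and the first cannot be supplied by the method you indicate.
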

\begin{proof}
The stable sheaf $F$ is isomorphic to a structured sheaf supported on a line by Proposition \ref{hilbsimp}. The list published by Eisenbud and Harris \tcite{page 40}{EH82} shows that there are two possible cases: item (1) and (2) of Lemma \ref{bcenter}.
However, $$\text{dim}\Ext^1(\cO_L,\cO_L(-1))=\mathrm{dim} \rH^0(N_{L/V_5}(-1))\leq1$$ for any line $L$ in $V_5$, the GIT-quotient in item (1) of Lemma \ref{bcenter} is an empty set. Therefore, $F\cong\cO_{L^3}$, where the triple line $L^3$ lies on a quadric cone (item (2) of Lemma \ref{bcenter}). In this case, the sheaf $F$ fits into the short
exact sequence
\[\ses{\cO_{L}(-1)}{F=\cO_{L^3}}{\cO_{L^2}},\]
where $L^2$ is the planar double line. Note that the double line $L^2$ uniquely exists if and only if $L$ is non-free.
This comes from the equality $\Ext^1(\cO_L, \cO_L(-1))\cong\rH^0(N_{L/V_5}(-1))$ and the result of Proposition \ref{linesinv1}. Let us show that $\Ext^1(\cO_{L^2},\cO_L(-1))=\CC$, which completes the proof of our claim.
From the structure sequence $\ses{I_{L^2}}{\cO_{V_5}}{\cO_{L^2}}$,
\[
\Ext^1(\cO_{L^2},\cO_L(-1))\cong \Ext^0(I_{L^2},\cO_L(-1)).
\]
We claim that $\Ext^0(I_{L^2},\cO_L(-1))=\CC$. From the resolution of the ideal sheaf $I_{L^2}$ in Proposition \ref{linesinv}, we obtain an exact sequence:
\[
0\lr \Ext^0(I_{L^2},\cO_L(-1))\lr \Ext^0(\cU,\cO_L(-1))\lr \Ext^0(\cO(-1),\cO_L(-1))\cong H^0(\cO_L)=\CC,
\]
where the middle term is $\Ext^0(\cU,\cO_L(-1))\cong \rH^0(\cU^*\otimes \cO_L(-1))$. Because the bundle $\cU$ is of rank two with $c_1(\cU)=-1$ and $L$ is a line in $\mathrm{Gr}(2,5)$, $\cU^*\otimes \cO_L(-1)=\cU|_L\cong\cO_L(-1)\oplus \cO_L$. Hence, $\text{dim}\Ext^0(I_{L^2},\cO_L(-1))\leq 1$. By definition, $L^2$ is planar and thus we have a short exact sequence $\ses{\cO_L(-1)}{\cO_{L^2}}{\cO_L}$. Taking the functor $\Ext(I_{L^2},-)$ in this exact sequence, we have
\[
0\lr \Ext^0(I_{L^2},\cO_L(-1))\lr \Ext^0(I_{L^2},\cO_{L^2}) \lr \Ext^0(I_{L^2},\cO_{L})\lr\cdots.
\]
The middle term $\Ext^0(I_{L^2},\cO_{L^2})\cong\CC^4$ because it is isomorphic to the tangent space of $\bH_2(V_5)=\PP^4$ at $L^2$. By employing a similar calculation as above, we have $\text{dim}\Ext^0(I_{L^2},\cO_{L})\leq 3$, which implies that $$\text{dim}\Ext^0(I_{L^2},\cO_L(-1))\geq 1.$$ This completes the proof.
\end{proof}
\begin{proposition}\label{triple}
The locus of triple lines lying on a quadric cone is a degree six smooth rational curve in $\bS(V_5)=\mathrm{Gr}(2,5)\subset \PP^9$.
\end{proposition}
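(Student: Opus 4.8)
The plan is to realize the triple-line locus as the image of a single, explicit $\SL_2$-equivariant morphism from $\PP^1$, and to read off both its smoothness and its degree from that description. By Lemma \ref{typeofdeg1} every stable sheaf in $\bS(V_5)$ supported on a single line is the structure sheaf $\cO_{L^3}$ of the unique triple line over a non-free line $L$, and conversely each non-free $L$ yields exactly one such sheaf. Hence the locus in question is precisely the set-theoretic image of
\[
\varphi\colon \PP^1 \cong \{\text{non-free lines}\} \lr \bS(V_5) = \Gr(2,5),\qquad L \longmapsto [\cO_{L^3}],
\]
where the identification of the non-free locus with $\PP^1$ is Lemma \ref{nonfreeinconic} (a smooth conic in $\bH(V_5,1)=\PP^2$). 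Since $\cO_{L^3}$ and $\cO_{L'^3}$ have different supports when $L\neq L'$, the map $\varphi$ is injective; thus the locus is irreducible, rational and one-dimensional, and it remains only to show that the image is smooth and to compute its Pl\"ucker degree.

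Both points I would extract from symmetry. It is classical that $\mathrm{Aut}(V_5)\cong \mathrm{PGL}_2(\CC)$ and that the associated five-dimensional representation is $\CC^5\cong \Sym^4\CC^2$, so that $\bS(V_5)=\Gr(2,\Sym^4\CC^2)$ compatibly with the $\SL_2$-action (Proposition \ref{hilbsimp}). Every ingredient entering $\varphi$ (non-freeness, the triple line on a quadric cone, and the isomorphism of Proposition \ref{hilbsimp}) is canonical, so $\varphi$ is $\SL_2$-equivariant, and it is non-constant by injectivity. An $\SL_2$-equivariant morphism from the homogeneous space $\PP^1=\SL_2/B$ is determined by the image of the base coset, which must be a $B$-stable $2$-plane in $\Sym^4\CC^2$; the only such plane is $\langle x^4,x^3y\rangle=x^3\cdot\Sym^1\CC^2$. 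Translating by the group one obtains
\[
\varphi([v]) = v^3 \cdot \Sym^1\CC^2 \subset \Sym^4\CC^2, \qquad [v]\in \PP^1 = \PP(\Sym^1\CC^2).
\]
Because the image is a single orbit of an algebraic group in characteristic zero it is smooth, and being the image of the complete curve $\PP^1$ it is closed; hence it is a smooth projective rational curve, which settles the smoothness claim.

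For the degree I would compute the Pl\"ucker vector of $\varphi([v])$ directly. Writing $v=sx+ty$ and taking the basis $v^3x,\,v^3y$ of the $2$-plane, one gets the coordinate vectors $v^3x=(s^3,3s^2t,3st^2,t^3,0)$ and $v^3y=(0,s^3,3s^2t,3st^2,t^3)$ in the monomial basis $x^4,x^3y,x^2y^2,xy^3,y^4$. The $2\times2$ minors are homogeneous of degree six in $(s,t)$, and since $p_{01}=s^6$ and $p_{34}=t^6$ occur among them they have no common zero. Therefore $\varphi^{*}\cO_{\PP^9}(1)=\cO_{\PP^1}(6)$, and since $\varphi$ is injective (hence birational onto its image) the image is a curve of degree six. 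I expect the main obstacle to be the middle step, namely correctly locating the point $[\cO_{L^3}]$ inside $\Gr(2,\Sym^4\CC^2)$, i.e.\ justifying the equivariant identification $\varphi([v])=v^3\cdot\Sym^1\CC^2$; once this concrete description is in hand, smoothness and the degree follow formally from the orbit structure and the elementary Pl\"ucker computation above.
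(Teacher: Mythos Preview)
Your argument is correct and takes a genuinely different route from the paper's. The paper works entirely computationally: it fixes explicit Pl\"ucker coordinates for $V_5$, writes down (with Macaulay2's help) the universal family of twisted cubics over an affine chart of $\bS(V_5)=\Gr(2,5)$ as a net of quadrics, and then solves directly for the locus where the cubic degenerates to a triple line, obtaining the explicit parameterization
\[
s\longmapsto
\begin{bmatrix}
1&0&-s^3&\tfrac{3}{4}s^4&\tfrac{3}{2}s^2\\
0&1&-\tfrac{3}{2}s^2&s^3&3s
\end{bmatrix}
\]
from which smoothness and degree six are read off by inspecting the Pl\"ucker minors. Your approach instead exploits the classical identification $\mathrm{Aut}(V_5)\cong\mathrm{PGL}_2$ with $\CC^5\cong\Sym^4\CC^2$: you observe that the triple-line map $\varphi$ is $\SL_2$-equivariant, pin down its unique possible form $\varphi([v])=v^3\cdot\Sym^1\CC^2$ via the Borel-fixed point, and then compute the degree by the same kind of $2\times 2$ minor calculation.

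What each buys: the paper's proof is self-contained within the framework already set up (no appeal to the automorphism group of $V_5$) and produces the explicit defining equations of the universal cubic, which is reassuring if one wants to check other local computations by hand. Your proof is cleaner and explains \emph{why} the degree is six (the cube in $v^3$), and it gives smoothness for free as an orbit property rather than by inspecting a parameterization. The one point that deserves a sentence of care in your write-up is the compatibility of the isomorphism $\bS(V_5)\cong\Gr(2,5)$ (Proposition~\ref{hilbsimp}, via Sanna's Schubert-variety correspondence) with the $\SL_2$-action; this is automatic because that correspondence is itself visibly equivariant, but it is worth saying so explicitly since the whole argument rests on it.
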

The proposition can be proven by an explicit calculation with the help of the computer program, Macaulay2 (\cite{M2}). Recall that the del Pezzo variety $V_5$ is defined by the linear section $\mathrm{Gr}(2,5)\cap \PP^6 \subset \PP^9$. Let $\{p_{ij}\}_{0\leq i<j\leq 4}$ be the Pl\"ucker coordinates of $\PP^9$. We use the linear section $\PP^6=H_1\cap H_2 \cap H_3$ defined by
\[I_{\PP^6}=\langle p_{12}-p_{03},\; p_{13}-p_{24},\; p_{14}-p_{02}\rangle.\]
Furthermore, let us denote $p_i:=p_{0i}$ for $i=1,2,3,4$. We find the universal family of $\bS(V_5)=\bH(V_5)=\mathrm{Gr}(2,5)$ around a triple line (lying on a quadric cone). Recall the correspondence $i:\mathrm{Gr}(2,5)\lr \bH(V_5)$ described in Remark 2.47 of \cite{San14}.
The map $i$ defines a correspondence between a line $L\subset \PP^4$ and the closed subscheme $ i([L])=\sigma_{2,0}(L)\cap \PP^6\subset V_5$, where $$\sigma_{2,0}(L):=\{[L']| L\cap L'\neq \emptyset,\; L'\subset \PP^4 \}$$ is the Schubert subvariety of $\Gr(2,5)$. Let us construct a flat family of twisted cubic curves in $V_5$ around the point
\begin{equation}\label{ori}[L_0]=\begin{bmatrix}
1&0&0&0&0\\
0&1&0&0&0
\end{bmatrix}\in \mathrm{Gr}(2,5)=\bH(V_5).\end{equation}
An affine chart of $\Gr(2,5)$ at the point $[L_0]$ is locally given by
\begin{equation}\label{sch}
\begin{bmatrix}
1&0&a_2&a_3&a_4\\
0&1&b_2&b_3&b_4
\end{bmatrix},\end{equation}
where $(a_2,a_3,a_4,b_2,b_3,b_4)\in \CC_{(a,b)}^6\subset \mathrm{Gr}(2,5)=\bH(V_5)$. Let $\{e_0,e_1,e_2,e_3, e_4\}$ be the standard coordinate vector of the space $\CC^5$, which gives the original projective space $\PP^4$. Let us define $L$ by
$$L=\mathrm{span}\langle w_0=e_0+a_2e_2+a_3e_3+a_4e_4,\; w_1=e_1+b_2e_2+b_3e_3+b_4e_4\rangle.$$
Then the line $L'=\mathrm{span}\langle v_0,v_1\rangle$ where
\[\begin{split}
v_0&=w_0+tw_1=e_0+te_1+(a_2+tb_2)e_2+(a_3+tb_3)e_3+(a_4+tb_4)e_4,\\
v_1&=w_1+k_2e_2+k_3e_3+k_4e_4=e_1+(b_2+k_2)e_2+(b_3+k_3)e_3+(b_4+k_4)e_4
\end{split}\]
such that $L\cap L'=\mathrm{span}\langle v_0\rangle$.
The computer program Macaulay2 (\cite{M2}) enables the scheme theoretic closure
$\sigma_{2,0}(L)\subset V_5\times \CC_{(a,b)}^6\times \CC_{(t,k_2,k_3,k_4)}$ to be computed.
After eliminating the variables $\{t,k_2,k_3,k_4\}$ by the computer program (\cite{M2}) again, we obtain a flat family $\cC$ of twisted cubic curves over $\CC_{(a,b)}^6$
\[
\xymatrix{
\cC\ar@{^{(}->}[r]\ar[dr]&V_5\times \CC_{(a,b)}^6\subset \PP^9\times\CC_{(a,b)}^6 \ar[d]\\
&\CC_{(a,b)}^6,
}
\]
where the defining equations of $\cC$ are given by the following nine equations:
\begin{enumerate}[(a)]
\item $p_{34}=(a_4b_3-a_3b_4)p_1+a_3p_4-a_4p_{3}+b_3p_{2}-b_4p_{24}$ ;
\item $p_{24}=(a_4b_2-a_2b_4)p_1+a_2p_4+(-a_4+b_2)p_{2}-b_4p_{3}$ ;
\item $p_{23}=(a_3b_2-a_2b_3)p_1+(a_2-b_3)p_{3}-a_3p_{2}+b_2p_{24}-b_3p_{3}$ ;
\item $p_{14}=p_2$, $p_{13}=p_{24}$, $p_{12}=p_{3}$;
\item $F_1(p,a_i,b_i)=F_2(p,a_i,b_i)=F_3(p,a_i,b_i)=0$, where
\[\begin{split}
F_1(p,a,b)=&p_3p_4-p_2^2-b_4p_1p_3+b_2p_1p_2-a_4p_1p_2+a_2p_1p_4+(a_4b_2-a_2b_4)p_1^2;\\
F_2(p,a,b)=&-p_2p_3+a_2p_4^2+a_3p_1p_4-a_4p_1p_3-a_4p_2p_4+b_2p_2p_4+b_3p_1p_2-b_4p_2^2+\\
&a_4b_2p_1p_4+a_4b_3p_1^2-a_2b_4p_1p_4-a_3b_4p_1^2;\\
F_3(p,a,b)=&p_3^2+b_4p_2p_3-b_3p_1p_3-b_2p_3p_4+a_4p_2^2-a_3p_1p_2-a_2p_2p_4+a_2p_1p_3+\\
&a_3b_2p_1^2-a_4b_2p_1p_2-a_2b_3p_1^2+a_2b_4p_1p_2.\\
\end{split}\]
\end{enumerate}
\begin{remark}
\begin{enumerate}
\item The relations $F_1=F_2=F_3=0$ in (e) can be written in terms of the net of quadrics:
\begin{equation}\label{net}
\mathrm{rank}\begin{bmatrix}
a_4p_2-a_3p_1-a_2p_4&-a_2p_1-p_3&a_4p_1+p_2\\
b_4p_2-b_3p_1-b_2p_4+p_3&-b_2p_1+p_2&b_4p_1-p_4
\end{bmatrix}\leq 1.
\end{equation}
\item The line $[L_0]$ in \eqref{ori} (i.e., the origin of $\CC_{(a,b)}^6$) represents the triple line in $V_5$ defined by the ideal
$$
I_{L^3}=\langle p_2p_3,p_3p_4-p_2^2,p_3^2,p_{12},p_{13},p_{14},p_{23},p_{24},p_{34}\rangle.
$$
\end{enumerate}
\end{remark}

\begin{proof}[Proof of Proposition \ref{triple}]
We claim that the locus of triple lines in the affine chart $\CC_{(a,b)}^6\subset\mathrm{Gr}(2,5)$ is parameterized by the degree six smooth rational curve
\[\begin{bmatrix}
1&0&-s^3&\frac{3}{4}s^4&\frac{3}{2}s^2\\
0&1&-\frac{3}{2}s^2&s^3&3s
\end{bmatrix}\in \mathrm{Gr}(2,5)=\bH(V_5)\]
for $s\in \CC$.
From the defining equations in (d) of the linear section, we have \begin{equation}\label{eqrel} a_4 + b_2 =a_2  +b _3=b_2^2  - b_3 b_4  + a_3=0.\end{equation}
Plugging these equations into the net \eqref{net} and performing the elementary operation, we have
\begin{equation}\label{eq4}
\text{rank}\begin{bmatrix}
-2b_2y-b_3w-b_4z&z&y\\
-z&y&w
\end{bmatrix}\leq1,\end{equation}
where $x=p_1$, $ y=p_2-b_2p_1$, $z=b_3p_1-p_3$, and $w=b_4p_1-p_4$.
Note that the triple line is uniquely determined by its supporting line (Lemma \ref{typeofdeg1}).
However, one can easily see that the support of cubic curves in \eqref{eq4} is a line in the projective space $\PP^3_{\{x,y,z,w\}}$
if and only if $b_2=-\frac{3}{2}s^2$, $b_3=s^3$, and $b_4=3s$ for $s\in \CC$. Combined with the equation \eqref{eqrel}, we obtain the result.
\end{proof}

\subsection{Pair of lines in $V_5$}\label{sec:3.2}
\begin{lemma}\label{pairofline}\label{typeofdeg}
Let $C=L_1\cup L_2$ be a reducible conic in $V_5$ such that $L_1\cap L_2=\{\mathrm{pt}.\}$.
Then there exists a unique non-split extension \begin{equation}\label{non2}\ses{\cO_{L_2}(-1)}{F}{\cO_{C}}\end{equation}
such that $F\cong \cO_{L_1\cup L_2^2}$, where $p_a(L_2^2)=0$ or $-1$. Furthermore, the line $L_2$ is non-free if and only if $p_a(L_2^2)=0$ (i.e., the double line is planar).
\end{lemma}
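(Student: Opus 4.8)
The plan is to realise $F$ as the structure sheaf of an honest cubic curve and then read off the double structure from the normal bundle of $L_2$. Since $\bS(V_5)\cong\bH(V_5)\cong\Gr(2,5)$ by Proposition \ref{hilbsimp}, every stable sheaf with Hilbert polynomial $3m+1$ is the structure sheaf $\cO_W$ of a Cohen--Macaulay cubic $W\subset V_5$. So once I know the prospective extension sheaf $F$ is pure (it is, being an extension of the pure sheaf $\cO_C$ by the pure sheaf $\cO_{L_2}(-1)$) and stable, I may write $F=\cO_W$ with $W_{\mathrm{red}}=C$ and multiplicity $2$ along $L_2$, i.e. $W=L_1\cup L_2^2$ for a double structure $L_2^2$ on $L_2$. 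Conversely, the reduction sequence $\ses{\cN}{\cO_W}{\cO_C}$ with $\cN=I_C/I_W$ supported on $L_2$ identifies $\cN$ with $\cO_{L_2}(-1)$ (its Euler characteristic is $\chi(\cO_W)-\chi(\cO_C)=1-1=0$, so it is a line bundle of degree $-1$ on $L_2$), which is exactly the asserted extension; it is non-split because $W$ is connected and non-reduced. Thus the whole statement reduces to (i) computing $\Ext^1_{V_5}(\cO_C,\cO_{L_2}(-1))$ and (ii) classifying the double structures $L_2^2$ inside $V_5$.

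For (i) I would apply $\Hom(-,\cO_{L_2}(-1))$ to $\ses{\cO_{L_1}(-1)}{\cO_C}{\cO_{L_2}}$. Using $\Hom(\cO_C,\cO_{L_2}(-1))=0$ one extracts the exact piece $0\lr\Ext^1(\cO_{L_2},\cO_{L_2}(-1))\lr\Ext^1(\cO_C,\cO_{L_2}(-1))\lr\Ext^1(\cO_{L_1},\cO_{L_2}(-1))\lr\Ext^2(\cO_{L_2},\cO_{L_2}(-1))$. Here $\Ext^1(\cO_{L_2},\cO_{L_2}(-1))\cong\rH^0(N_{L_2/V_5}(-1))$ and $\Ext^2(\cO_{L_2},\cO_{L_2}(-1))\cong\rH^1(N_{L_2/V_5}(-1))$ are both $\CC$ for $L_2$ non-free and both $0$ for $L_2$ free, while a local computation at the node $p=L_1\cap L_2$ (transverse coordinate lines in a smooth chart, via a Koszul resolution) gives $\Ext^1(\cO_{L_1},\cO_{L_2}(-1))=\CC$. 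When $L_2$ is free the two $L_2$-terms vanish and the sequence collapses to $\Ext^1(\cO_C,\cO_{L_2}(-1))\cong\CC$ at once, giving existence and uniqueness of the non-split extension. When $L_2$ is non-free the value is $\CC$ precisely when the connecting map $\Ext^1(\cO_{L_1},\cO_{L_2}(-1))\lr\Ext^2(\cO_{L_2},\cO_{L_2}(-1))$ is injective.

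For (ii), I would compute $p_a(L_2^2)$ by intersection counting. Writing $\ell$ for the length of the scheme $L_1\cap L_2^2$, additivity of Euler characteristics along $W=L_1\cup L_2^2$ together with $\chi(\cO_W)=1$ gives $\chi(\cO_{L_2^2})=\ell$, hence $p_a(L_2^2)=1-\ell$. Since $L_2^2$ is a genuine embedded double line, $\cI:=I_{L_2}/I_{L_2^2}$ is a quotient line bundle of the rank-two, degree-zero bundle $N^*_{L_2/V_5}$, so $\deg\cI\geq-1$ and $\chi(\cO_{L_2^2})=2+\deg\cI$, i.e. $\ell=\deg\cI+2$. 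As $C$ is a nodal conic the two lines meet transversally, so $I_{L_2}^2\subset I_{L_2^2}$ forces $\ell\leq2$, whence $\deg\cI\in\{-1,0\}$ and $p_a(L_2^2)\in\{0,-1\}$. The dichotomy is then governed by which quotient can occur: a surjection $N^*_{L_2/V_5}\twoheadrightarrow\cO_{L_2}(-1)$ exists iff $\rH^0(N_{L_2/V_5}(-1))\neq0$, i.e. iff $L_2$ is non-free, giving the planar double line with $p_a=0$; a surjection onto $\cO_{L_2}$ exists iff $L_2$ is free, giving $p_a=-1$. Because $N^*_{L_2/V_5}=\cO_{L_2}(-1)\oplus\cO_{L_2}(1)$ (non-free) admits no surjection onto $\cO_{L_2}$, and $\cO_{L_2}^{\oplus2}$ (free) admits none onto $\cO_{L_2}(-1)$, the two cases are mutually exclusive, yielding $L_2$ non-free $\iff p_a(L_2^2)=0$.

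The main obstacle is the uniqueness in the non-free case, i.e. showing the connecting map above is injective, equivalently $\Ext^1_{V_5}(\cO_C,\cO_{L_2}(-1))=\CC$ rather than $\CC^2$. This reduces to a local analysis at the node: the sheaf $\cExt^1(\cO_C,\cO_{L_2}(-1))$ sits in $\ses{N_{L_2/V_5}(-1)}{\cExt^1(\cO_C,\cO_{L_2}(-1))}{\CC_p}$, a length-one positive elementary modification of $\cO_{L_2}\oplus\cO_{L_2}(-2)$, and $\Ext^1_{V_5}(\cO_C,\cO_{L_2}(-1))=\rH^0$ of this bundle; it equals $\CC$ exactly when the modification is \emph{generic}, giving $\cO_{L_2}\oplus\cO_{L_2}(-1)$, and not the special modification giving $\cO_{L_2}(1)\oplus\cO_{L_2}(-2)$. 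I expect to settle this either by checking that the modification direction (encoded by $T_pL_1$ modulo $T_pL_2$) never aligns with the distinguished $\cO_{L_2}(1)$-direction of the non-free normal bundle, or by a direct Macaulay2 computation on the universal family exactly as in the proof of Proposition \ref{triple}. The remaining points (purity and stability of $F$, so that Proposition \ref{hilbsimp} applies, and the identification $\cN\cong\cO_{L_2}(-1)$) are routine.
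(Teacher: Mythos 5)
Your reduction of the statement to (i) the computation of $\Ext^1_{V_5}(\cO_C,\cO_{L_2}(-1))$ and (ii) a ribbon-theoretic classification of the double structures $L_2^2$ is sound, and part (ii) is in fact a clean alternative to the paper's treatment of the dichotomy (the paper instead shows that when $L_2$ is non-free the pull-back map $\Ext^1(\cO_{L_2},\cO_{L_2}(-1))\to\Ext^1(\cO_C,\cO_{L_2}(-1))$ between two one-dimensional spaces is an isomorphism, so the extension is pulled back from the planar double line). But your proof of (i) has a genuine gap at exactly the point the lemma is about: uniqueness of the non-split extension when $L_2$ is non-free. Your long exact sequence leaves open whether $\Ext^1(\cO_C,\cO_{L_2}(-1))$ is $\CC$ or $\CC^2$; you correctly translate this into the question of whether the elementary-modification direction determined by $T_pL_1$ inside $N_{L_2/V_5}|_p$ can align with the distinguished $\cO_{L_2}(1)$-subbundle of the non-free normal bundle, and then you say you ``expect to settle this'' by one of two unexecuted strategies. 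That alignment question is a nontrivial geometric assertion about incident pairs of lines on $V_5$ --- nothing in your argument rules out that it fails for special pairs --- so the proposal as written asserts the crux rather than proving it.

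The paper avoids this local analysis entirely by a global argument, which is the one ingredient your proposal is missing: by Proposition \ref{linesinv}, the ideal sheaf of \emph{any} conic in $V_5$, including the reducible conic $C=L_1\cup L_2$, admits the resolution $\ses{\cO_{V_5}(-1)}{\cU|_{V_5}}{I_C}$. Combining $\Ext^1(\cO_C,\cO_{L_2}(-1))\cong\Hom(I_C,\cO_{L_2}(-1))$ (from the structure sequence of $\cO_{V_5}$, since $\rH^0(\cO_{L_2}(-1))=\rH^1(\cO_{L_2}(-1))=0$) with this resolution gives an embedding
$\Hom(I_C,\cO_{L_2}(-1))\hookrightarrow \Hom(\cU|_{V_5},\cO_{L_2}(-1))\cong\rH^0(\cU|_{L_2})=\rH^0(\cO_{L_2}\oplus\cO_{L_2}(-1))=\CC$,
so the dimension is at most $1$ \emph{uniformly}, whether $L_2$ is free or non-free; the lower bound $\geq 1$ then follows from a tangent-space count on the Hilbert scheme $\bH(V_5,2)=\PP^4$ of conics, exactly as in the proof of Lemma \ref{typeofdeg1} (which is what the paper means by ``the same computation''). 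In other words, the restriction $\cU|_{L_2}\cong\cO_{L_2}\oplus\cO_{L_2}(-1)$ of the tautological bundle is the global input about $V_5$ that forces your modification to be ``generic''; your node-local analysis cannot see it. To complete your route you must either import this resolution argument or actually carry out one of your two proposed verifications.
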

\begin{proof}
Replacing $L^2$ (resp. L) by $C$ (resp. $L_2$) in Lemma \ref{typeofdeg1} and repeating the same computation, we have \begin{equation}\label{eq13}\Ext^1(\cO_{C},\cO_{L_2}(-1))\cong \CC.\end{equation}
Compare with item (3) of Lemma \ref{bcenter}. Because each stable sheaf $F\in \bS_3(V_5)$ is isomorphic to a CM-curve, we have $F\cong\cO_{L_1\cup L_2^2}$ by \cite[page 39-41, VII, XI]{EH82}.

For the second part, let us assume that $L_2$ is non-free. Taking the functor $\Ext(-,\cO_{L_2}(-1))$ in the structure exact sequence $\ses{\cO_{L_1}(-1)}{\cO_{C}}{\cO_{L_2}}$ of the reduced conic $C=L_1\cup L_2$, we obtain
\[
0=\Ext^0(\cO_{L_1}(-1),\cO_{L_2}(-1))\lr \Ext^1(\cO_{L_2},\cO_{L_2}(-1))\stackrel{\phi}{\lr}\Ext^1(\cO_{L_1\cup L_2},\cO_{L_2}(-1)).
\]
Because $\Ext^1(\cO_{L_2},\cO_{L_2}(-1))\cong \Ext^0(I_{L_2},\cO_{L_2}(-1))=\rH^0(N_{L_2/V_5}(-1))=\CC$ and \eqref{eq13}, the pull-back map $\phi$ is an isomorphism. Hence, $F$ is completely determined by the pull-back of $\cO_{L_2^2}$ such that $L_2^2$ is a planar double line.

Conversely, if $p_a(L_2^2)=0$, then the sheaf $\cO_{L_2^2}$ fits into the non-split extension $$\ses{\cO_{L_2}(-1)}{\cO_{L_2^2}}{\cO_{L_2}}.$$  Hence, $\Ext^1(\cO_{L_2},\cO_{L_2}(-1))=\rH^0(N_{L_2/V_5}(-1))\neq0$, which implies that $L_2$ is non-free.
\end{proof}
From Lemma \ref{pairofline}, we can say that a stable sheaf $F\in \bS(V_5)$ with the reduced support $L_1\cup L_2$ is unique whenever the multiplicity of the restriction $F|_{L_i}$ of $F$ to $L_i$ is $r(F|_{L_i})=i$.
The locus of such sheaves can be geometrically described by using \cite[Section 2]{San14}.
\begin{proposition}\label{blcenter2}
In the notation of Lemma \ref{pairofline}, the closure of the locus of stable sheaves of the form $\cO_{L_1\cup L_2^2}$ is isomorphic to the space of the ordered pairs of intersecting lines in $V_5$.
\end{proposition}
\begin{proof}
Following the notation of Section 2 in \cite{San14}, let $IQ\subset \bH_1(V_5)\times \bH_1(V_5)$ be the space of the ordered pairs of intersecting lines in $V_5$. It was proved that $IQ$ is isomorphic to the full flag variety $\mathrm{Fl}(1,2;\CC^3)$ and $IQ\cap \Delta$ is isomorphic to the locus (i.e., conic) of non-free lines in $V_5$, where $\Delta$ is the diagonal of $\bH_1(V_5)\times \bH_1(V_5)$ (\cite[Proposition 2.26 and Proposition 2.27]{San14}). However, there exists a two-fold ramified covering map $IQ\lr \bH(V_5,2)$ onto the locus of reducible conics in $V_5$ (\cite[Proposition 2.44]{San14}). Using these ones, one can easily construct a flat family of extension sheaves in \eqref{non2} over $IQ$ (\cite[Example 2.1.12]{HuLe10}), which induces a closed embedding $IQ\stackrel{i}{\subset} \bS(V_5,3)$. Note that under this closed embedding, $i(IQ\cap \Delta)$ is isomorphic to the locus of triple lines in Proposition \ref{triple}.
\end{proof}
\begin{notation}\label{notimpor}
Let us denote by $\bar{\Theta}_1(V_5)$ (resp. $\bar{\Theta}_2(V_5)$) the smooth sublocus of $\bS(V_5)$ parameterizing the stable sheaves in Proposition \ref{triple} (resp. \ref{blcenter2}). Compare with Lemma \ref{bcenter}.
\end{notation}
\section{Irreducible components of the Kontsevich space}\label{sec:4}
Let us recall that $\cM(V_5):=\cM(V_5,3)$ is the moduli space of stable maps $f:C\lr V_5$ with $g(C)=0$ and $\deg(f^*\cO_X(1))=3$. The smooth rational curve component in $\cM(V_5)$ is denoted by $\bM(V_5)$. There may be extra connected components because the moduli space $\cM(V_5)$ is not a smooth stack (Remark \ref{extob}). The goal of this section is to prove the following proposition.
\begin{proposition}\label{mainprop2}
Let $\cZ$ be the universal subscheme of the Hilbert scheme $\bH_1(V_5)$ of lines in $V_5$.
\begin{enumerate}
\item The stable map space $\cM(V_5)$ consists of two irreducible components: $\bM(V_5)$ and the relative stable maps space $\cM(\cZ)$ over $\bH_1(V_5)$.
\item The intersection part $\bM(V_5)$ and $\cM(\cZ)$ is the relative stable map space over the locus of non-free lines in $V_5$.
\end{enumerate}
\end{proposition}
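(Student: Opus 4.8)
The plan is to exhibit $\bM(V_5)$ and $\cM(\cZ)$ as two distinct irreducible components that together exhaust $\cM(V_5)$, and then to pin down their intersection by a local deformation analysis. First I would record the dimensions. Since smooth twisted cubics in $V_5$ are parametrized by an open subset of $\cH(V_5,3)=\Gr(2,5)$ and carry no automorphisms, $\bM(V_5)$ is irreducible of dimension $6$. On the other hand $\cZ\to\bH_1(V_5)=\PP^2$ is the universal line, a $\PP^1$-bundle, so the relative Kontsevich space $\cM(\cZ)=\cM_{0,0}(\cZ/\PP^2,3)$ fibers over the irreducible base $\PP^2$ with irreducible fibers $\cM_{0,0}(\PP^1,3)$ of dimension $2\cdot 3-2=4$; hence $\cM(\cZ)$ is irreducible of dimension $6$. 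The natural morphism $\cM(\cZ)\to\cM(V_5)$, sending $(L,[f\colon C\to L])$ to $[C\to L\hookrightarrow V_5]$, is injective on closed points, since a stable map whose image lies in a single line recovers both the line and the factorization; its image is therefore a closed irreducible $6$-dimensional subvariety, which I also denote by $\cM(\cZ)$.

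The deformation-theoretic heart of the argument is Lemma \ref{spectralseqmap} applied to $V_5\subset\Gr(2,5)$. Here $N_{V_5/\Gr(2,5)}\cong\cO_{V_5}(1)^{\oplus 3}$, and since $\Gr(2,5)$ satisfies condition (I) the space $\cM(\Gr(2,5),3)$ is unobstructed; the lemma then identifies the obstruction space of $\cM(V_5)$ at $[f]$ with the cokernel of $T_{[f]}\cM(\Gr(2,5))\to \rH^0(C,f^*N_{V_5/\Gr(2,5)})$, which for a cover of a line is $\rH^1(\PP^1,f^*T_{V_5})$. Using the splitting $T_{V_5}|_L\cong\cO_L(2)\oplus N_{L/V_5}$ on $\PP^1$ together with Proposition \ref{linesinv1}, a degree-$3$ cover $f$ of a line $L$ gives $f^*T_{V_5}\cong\cO(6)\oplus\cO^{\oplus 2}$ when $L$ is free and $f^*T_{V_5}\cong\cO(6)\oplus\cO(3)\oplus\cO(-3)$ when $L$ is non-free, so $\rH^1(\PP^1,f^*T_{V_5})$ is $0$ in the free case and $\CC^2$ in the non-free case. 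Consequently $\cM(V_5)$ is smooth of dimension $6$ at every stable map that does not factor through a non-free line. In particular a cover of a free line is a smooth point of $\cM(V_5)$ whose image is a line, so it lies in $\cM(\cZ)$ but not in $\bM(V_5)$; as a smooth point lies on a unique component, this shows $\cM(\cZ)$ is a genuine irreducible component, distinct from $\bM(V_5)$.

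To see that these are the only components I would argue by the dichotomy ``image contained in a line'' versus ``not''. If $f(C)$ spans only a line, then $f$ factors through that line and $[f]\in\cM(\cZ)$. If $f(C)$ is not contained in a line, its supporting cycle is a degenerate twisted cubic (a plane cubic, a conic meeting a line, three lines, or a double line meeting a line, possibly with contracted tails), and such a configuration is a flat limit of smooth twisted cubics inside $V_5$; since conditions (II)--(IV) hold for $V_5$, the obstruction computation above places these maps again on the smooth $6$-dimensional sheet carrying the embedded twisted cubics, so $[f]\in\bM(V_5)$. As the locus of maps with image not in a line is irreducible with closure $\bM(V_5)$, no further component can occur, giving part (1). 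For part (2), the free-line computation already shows $\bM(V_5)\cap\cM(\cZ)$ is contained in the locus of covers of non-free lines. For the reverse inclusion I would smooth a non-free-line cover to a twisted cubic: the positive normal direction $\cO_L(1)\subset N_{L/V_5}$ sweeps out the quadric cone carrying the triple line of Section \ref{sec:3.1}, and the explicit degenerating family constructed in the proof of Proposition \ref{triple}, lifted from sheaves to stable maps, realizes every cover of a non-free line as a limit of smooth twisted cubics. Thus $\bM(V_5)\cap\cM(\cZ)$ is exactly the relative stable map space over the conic of non-free lines.

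The main obstacle, I expect, is the smoothing step: producing, within $V_5$, a flat family of smooth twisted cubics degenerating to a prescribed degree-$3$ cover of a non-free line, and more generally verifying that every image-not-in-a-line configuration is $V_5$-smoothable. Precisely along the non-free locus the obstruction space is nonzero, so one cannot deduce smoothability from unobstructedness there; instead the existence of the family must be extracted from the quadric-cone geometry of non-free lines, exactly as in the construction underlying Proposition \ref{triple}. Controlling this boundary carefully is also what guarantees that the irreducible locus of ``image not in a line'' maps has closure precisely $\bM(V_5)$, with no spurious extra component hiding among the multiple covers.
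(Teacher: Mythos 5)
Your component-separation step and the forward half of part (2) are sound, and in fact take a slightly different (arguably cleaner) route than the paper: you compute $\mathrm{Ob}_f(V_5)\cong\rH^1(f^*T_{V_5})$ via Lemma \ref{spectralseqmap}, find it is $0$ for covers of free lines and $\CC^2$ for covers of non-free lines (matching Remark \ref{extob}), and then use the fact that a smooth point lies on a unique irreducible component to conclude both that $\cM(\cZ)$ is a genuine component and that $\bM(V_5)\cap\cM(\cZ)$ can only sit over non-free lines. (The paper gets this last containment differently, by degenerating the structure sheaves $\cO_{C_t}$ and invoking Lemma \ref{typeofdeg1}.) However, there are two genuine gaps. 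First, part (1) requires showing that every stable map whose image is a \emph{pair of lines} $L_1\cup L_2$, with one line doubly covered (image of degree $2$, but the map does not factor through a single line), lies in $\bM(V_5)$. Your ``obstruction computation above'' does not cover these maps — it only treats covers of a single line — and the assertion that ``the configuration is a flat limit of smooth twisted cubics'' is a statement about curves or cycles, not about stable maps, so it does not by itself place $[f]$ on $\bM(V_5)$. This is exactly where the paper does its real work: Lemmas \ref{ob2} and \ref{ob1} prove $\mathrm{Ob}_f(V_5)=0$ at such maps, and their proofs need genuine geometric input, namely that two distinct meeting lines in $V_5$ cannot both be non-free (Furushima--Nakayama's tangent-line description of the conic of non-free lines) and that $N_{C/V_5}\cong\cU^*|_C$ for conics (Proposition \ref{linesinv}). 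Incidentally, plane cubics do not occur in $V_5$ (it is cut out by quadrics and contains no plane), so that item in your list of images is vacuous and should be ruled out rather than carried along.

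Second, the reverse inclusion in part (2) — every degree-$3$ cover of a non-free line is a limit of maps with smooth twisted cubic image — cannot be obtained by ``lifting the degenerating family of Proposition \ref{triple}'': that family produces particular limit stable maps over each non-free line, whereas the locus you must fill is the full $4$-dimensional fiber $\cM_{0,0}(\PP^1,3)$ over every point of the non-free conic, and precisely there the obstruction space is $\CC^2\neq0$, so no unobstructedness argument is available (as you yourself note). The paper's key idea, absent from your proposal, sidesteps any smoothing construction: by \cite[Theorem 2]{FP97} the space $\cM(V_5)$ is a local complete intersection of pure expected dimension $6$, so Hartshorne's connectedness theorem \cite[Theorem 3.4]{Har62} forces the intersection of the two $6$-dimensional components to be pure of dimension $5$; since by your forward inclusion it is contained in the irreducible $5$-dimensional relative stable map space over the non-free conic, it must equal that space. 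Without this purity mechanism, or an actual construction smoothing an \emph{arbitrary} cover of a non-free line, your proof of part (2) is incomplete.
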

Lehmann and Tanimoto \cite[Theorem 7.9]{LT17} proved that item (1) of Proposition \ref{mainprop2} holds for any degree $d\geq2$. However, the proof we present below differs from theirs to the best of the author’s knowledge. Furthermore, item (2) of Proposition \ref{mainprop2} seems to be new.
\subsection{Obstruction of a stable map}
We prove that the obstruction space vanishes for stable maps of which the image is a pair of lines.
\begin{lemma}\label{ob2}
Let $C'=L_1 \cup L_2$ be the pair of lines in $V_5$. Then $$\rH^1(T_{V_5}|_{C'})=0.$$
\end{lemma}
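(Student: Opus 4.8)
The plan is to compute $\rH^1(T_{V_5}|_{C'})$ by restricting the tangent bundle of $V_5$ to the reducible conic $C' = L_1 \cup L_2$ and reducing to the two component lines via the normalization sequence. First I would use the standard sequence
\[
\ses{\cO_{C'}}{\cO_{L_1}\oplus \cO_{L_2}}{\cO_p}
\]
where $p = L_1 \cap L_2$ is the single node, tensor it with the locally free sheaf $T_{V_5}|_{C'}$, and take the long exact sequence in cohomology. This reduces the computation of $\rH^1(T_{V_5}|_{C'})$ to understanding $\rH^1(T_{V_5}|_{L_i})$ for $i=1,2$, since $\rH^1$ of a skyscraper sheaf vanishes and the connecting maps are controlled by the evaluation at $p$.

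The key step is then to determine the splitting type of $T_{V_5}|_{L}$ for a single line $L$ in $V_5$. Here I would use the normal bundle sequence $\ses{T_L}{T_{V_5}|_L}{N_{L/V_5}}$ together with Proposition \ref{linesinv1}, which gives $N_{L/V_5}\cong \cO_L(1)\oplus \cO_L(-1)$ (non-free case) or $\cO_L\oplus \cO_L$ (free case). Since $T_L \cong \cO_L(2)$, in either case $T_{V_5}|_L$ is a rank three bundle of degree $2 + (1-1) = 2$ or $2+0 = 2$; the point is that every summand has degree $\geq -1$, so $\rH^1(T_{V_5}|_L)=0$ for both free and non-free lines. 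This is the computation that makes the whole argument work: on $\PP^1$ a direct sum of line bundles has vanishing $\rH^1$ precisely when each summand has degree at least $-1$, and the lowest summand appearing is $\cO_L(-1)$ in the non-free case.

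The main obstacle is the gluing/connecting-map analysis at the node. Even though $\rH^1(T_{V_5}|_{L_i})=0$ for each component, I must verify that the map
\[
\rH^0(T_{V_5}|_{L_1})\oplus \rH^0(T_{V_5}|_{L_2}) \lr (T_{V_5}|_{C'})_p
\]
obtained from evaluation at $p$ is surjective, so that the long exact sequence forces $\rH^1(T_{V_5}|_{C'})=0$. This amounts to showing that global sections of $T_{V_5}$ along the two lines can independently hit the full $3$-dimensional fiber at $p$; since $\rH^0(T_{V_5}|_{L_i})$ has dimension at least $3$ (it is the sum of $\rH^0$ of the three line-bundle summands, each contributing at least one section because degrees are $\geq -1$ and at least one summand has degree $2$), the evaluation from even a single component is already surjective onto the $3$-dimensional fiber. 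Hence the combined evaluation map is surjective and $\rH^1(T_{V_5}|_{C'})=0$ follows. The only care needed is the non-free case, where one summand of $N_{L/V_5}$ is $\cO_L(-1)$ with no sections, but the remaining summands $\cO_L(2)$ and $\cO_L(1)$ supply enough sections to cover the fiber, so the conclusion is unaffected.
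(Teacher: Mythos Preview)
Your normalization-sequence approach is a reasonable alternative to the paper's structure-sequence argument, but there is a genuine gap in the surjectivity step. You claim that the evaluation map $\rH^0(T_{V_5}|_{L_i}) \to (T_{V_5})_p$ is surjective from a single component, arguing that each summand of $T_{V_5}|_{L_i}$ contributes at least one section because the degrees are $\geq -1$. This is false: $\cO_L(-1)$ has \emph{no} global sections. In the non-free case $T_{V_5}|_L \cong \cO_L(2)\oplus \cO_L(1)\oplus \cO_L(-1)$, so the image of the evaluation map at $p$ is only the $2$-dimensional subspace of $(T_{V_5})_p$ corresponding to the first two summands, not the full fiber. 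Your final sentence acknowledges the $\cO_L(-1)$ summand but then asserts that $\cO_L(2)$ and $\cO_L(1)$ ``supply enough sections to cover the fiber''; they do not, since their sections cannot hit the $\cO_L(-1)$ direction.

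The missing ingredient is precisely the geometric fact the paper invokes: at most one of $L_1,L_2$ can be non-free. This follows from Lemma~\ref{nonfreeinconic} together with \cite[Corollary 1.2]{FN89}: the lines meeting a non-free line $L_q$ are parameterized by the tangent line to the conic of non-free lines at $[L_q]$, and a tangent line to a smooth conic meets it only at the point of tangency. Once you know that, say, $L_1$ is free, then $T_{V_5}|_{L_1}\cong \cO(2)\oplus\cO\oplus\cO$ is globally generated, so evaluation at $p$ from the $L_1$ component alone is surjective and your argument goes through. The paper uses the same fact but packages it differently: it takes the sequence $\ses{\cO_{L_1}(-1)}{\cO_{C'}}{\cO_{L_2}}$ with the twist on the \emph{free} line, so that $T_{V_5}|_{L_1}(-1)$ has summand degrees $1,-1,-1$ and hence vanishing $\rH^1$; had the twist landed on a non-free line one would get an $\cO(-2)$ summand and the argument would fail. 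Both approaches hinge on the same geometric input, which your proposal omits.
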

\begin{proof}
One of the two lines $L_1$ and $L_2$ should be free, based on the following reasoning.
Recall that the non-free line consists of a smooth conic in $\bH_1(V_5)=\PP^2$ (Lemma \ref{plane}). From \cite[Corollary 1.2]{FN89}, for a point $q=[L_q]$ in the smooth conic, the lines in $V_5$ parameterized by points in the tangent line $T_q\PP^2$ at $q$ are all of the lines meeting with the non-free line $L_q$.
Hence, we may assume that $L_1$ is a free line. By tensoring the tangent bundle $T_{V_5}$ in the structure sequence $\ses{\cO_{L_1}(-1)}{\cO_{C'}}{\cO_{L_2}}$, we have a long exact sequence
$$
\lr \rH^1(T_{V_5}|_{L_1}(-1))\lr \rH^1(T_{V_5}|_{C'})\lr  \rH^1(T_{V_5}|_{L_2}) \lr.
$$
From Proposition \ref{linesinv1}, we have $\rH^1(T_{V_5}|_{L_1}(-1))=\rH^1(T_{V_5}|_{L_2})=0$, which completes the proof of the claim.
\end{proof}

\begin{lemma}\label{ob1}
Let $f:C\lr C'\subset V_5$ be the stable map with $C'=L_1\cup L_2$, $L_1\neq L_2$ such that the restriction map $f|_{f^{-1}(L_i)}$ is an $i$-fold covering map onto $L_i$. Then the obstruction space of $\cM(V_5)$ at $[f]$ is
$$\Ext_C^2([f^*\Omega_{V_5}\to\Omega_C],\cO_C)=0.$$
\end{lemma}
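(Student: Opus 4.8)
The plan is to compute the obstruction space $\Ext^2_C([f^*\Omega_{V_5}\to\Omega_C],\cO_C)$ using the deformation-theoretic machinery assembled in the Preliminaries, reducing everything to cohomology of normal bundles on the source curve. Since $f$ maps onto $C'=L_1\cup L_2$ with the component over $L_2$ being a double cover, the source curve $C$ is a nodal genus-zero curve; the simplest model is $C=C_1\cup C_2$ where $f|_{C_1}$ is an isomorphism onto $L_1$ and $f|_{C_2}:C_2\to L_2$ is a degree-two map, with $C_1,C_2$ meeting so that $C$ has arithmetic genus $0$. First I would apply Lemma \ref{spectralseqmap} with $Y=C'$ (a local complete intersection in $V_5$, being a nodal conic) to relate the obstruction $\Ext^2_C([f^*\Omega_{V_5}\to\Omega_C],\cO_C)$ to the intrinsic obstruction $\Ext^2_C([f^*\Omega_{C'}\to\Omega_C],\cO_C)$ and the term $\rH^1(f^*N_{C'/V_5})$.

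The strategy then splits into two pieces. For the normal-bundle term, I would use the long exact sequence coming from the projection formula and the structure sequence of $C'=L_1\cup L_2$, together with Lemma \ref{ob2}, which already gives $\rH^1(T_{V_5}|_{C'})=0$; pulling this back along $f$ (a finite map of genus-zero curves) and combining with the tangent sequence of $C'\subset V_5$, I expect $\rH^1(f^*N_{C'/V_5})=0$. For the intrinsic term $\Ext^2_C([f^*\Omega_{C'}\to\Omega_C],\cO_C)$, the point is that $f:C\to C'$ is a map of nodal curves, and by standard deformation theory of maps between curves the obstruction to deforming $f$ as a map \emph{into} $C'$ is controlled by $\rH^1$ of the pullback of a line bundle (the dualizing-type term) on the components, which vanishes on a tree of $\PP^1$'s for the relevant positive-degree bundles. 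Concretely, on the component $C_2$ mapping with degree two to $L_2\cong\PP^1$ and on the component $C_1$ mapping isomorphically, the relevant $\rH^1$ groups vanish by degree reasons, so the intrinsic obstruction also vanishes.

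Putting these together through the six-term sequence of Lemma \ref{spectralseqmap}, vanishing of both the left-hand intrinsic $\Ext^2$ and the right-hand $\rH^1(f^*N_{C'/V_5})$ forces $\Ext^2_C([f^*\Omega_{V_5}\to\Omega_C],\cO_C)=0$, which is exactly the claim. The main obstacle I anticipate is the careful treatment of the double-cover component: because $f|_{C_2}$ is a degree-two map to a line, one must correctly identify the cotangent complex contribution at the ramification points and at the node joining $C_1$ and $C_2$, and verify that the relevant sheaf on $C$ whose $\rH^1$ one needs still has nonnegative degree on each irreducible component. The reduction to $\rH^1(f^*N_{C'/V_5})$ via Lemma \ref{spectralseqmap} is clean, so the real work is the local analysis at the node and the ramification locus ensuring that the intrinsic obstruction term $\Ext^2_C([f^*\Omega_{C'}\to\Omega_C],\cO_C)$ genuinely vanishes rather than merely being small.
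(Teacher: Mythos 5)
Your reduction via Lemma \ref{spectralseqmap} is the right starting point, and your treatment of the normal-bundle term matches the paper's: $\rH^1(f^*N_{C'/V_5})=0$ follows from the sequence $\ses{N_{C'/V_5}}{f^*N_{C'/V_5}}{N_{C'/V_5}|_{L_2}(-1)}$, Lemma \ref{ob2}, and the identification $N_{C'/V_5}\cong \cU^*|_{C'}$ of Proposition \ref{linesinv}. But the other half of your plan fails: the intrinsic obstruction $\Ext^2_C([f^*\Omega_{C'}\to\Omega_C],\cO_C)$ is \emph{not} zero — it is isomorphic to $\CC$, as the paper's diagram records. The point you flag at the end as ``the real work'' is exactly where the claim breaks. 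The target $C'$ is nodal, and $f$ hits the node $p=L_1\cap L_2$ not only at the node $q$ of $C$ (where $f$ is a local isomorphism of nodal germs, contributing nothing), but also — say in the generic configuration where the double cover $f|_{f^{-1}(L_2)}$ is unramified over $p$ — at a second point $q'$ lying on the \emph{smooth} locus of $C$. Near $q'$ one has $f^{\#}x=0$, $f^{\#}y=v$, so $f^*\Omega_{C'}\cong \CC_{q'}\oplus\cO$ acquires torsion at a smooth point of $C$; locally the complex $[f^*\Omega_{C'}\to\Omega_C]$ is quasi-isomorphic to $\CC_{q'}[1]$, which produces $\cExt^2([f^*\Omega_{C'}\to\Omega_C],\cO_C)\cong\CC_{q'}$ and hence a one-dimensional global $\Ext^2$. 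Equivalently, from the defining triangle, $\Ext^2_C([f^*\Omega_{C'}\to\Omega_C],\cO_C)\cong\mathrm{coker}\bigl(\Ext^1(\Omega_C,\cO_C)\to\Ext^1(f^*\Omega_{C'},\cO_C)\bigr)\cong\CC$. ``Degree reasons on a tree of $\PP^1$'s'' only control maps to a \emph{smooth} target; they say nothing about this torsion contribution at a smooth point of the source mapping to the singular point of the target.

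Consequently, the six-term sequence together with $\rH^1(f^*N_{C'/V_5})=0$ only tells you that $\mathrm{Ob}_f(V_5)$ is a quotient of $\mathrm{Ob}_f(C')\cong\CC$, and your argument cannot conclude. The missing idea — the crux of the paper's proof — is to show that the surjection $\mathrm{Ob}_f(C')\to\mathrm{Ob}_f(V_5)$ is the \emph{zero} map. This is done via a commutative diagram (cf.\ \cite[Lemma 4.10]{CK11}) identifying $\mathrm{Ob}_f(C')\cong\Ext^1(\Omega_{C'},\cO_{C'})$, the one-dimensional space spanned by the smoothing of the node of $C'$, and factoring the map $\mathrm{Ob}_f(C')\to\mathrm{Ob}_f(V_5)$ through $\rH^1(T_{V_5}|_{C'})$, which vanishes by Lemma \ref{ob2}. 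Geometrically, the obstruction class is the node of the target, and it dies in $\mathrm{Ob}_f(V_5)$ precisely because $C'$ can be smoothed to a conic inside $V_5$. So Lemma \ref{ob2} must be used twice — once for the normal bundle, and a second, essentially different time to kill the nonvanishing intrinsic obstruction — and this second use is absent from your proposal.
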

\begin{proof}
Let us write shortly $\mathrm{Ob}_f(X):=\Ext^2_C([f^*\Omega_X\to\Omega_C],\cO_C)$ for a variety $X$.
Suppose that $\rH^1(f^*N_{C'/V_5})=0$ instantly. If this is true, then we have a commutative diagram (cf. \cite[Lemma 4.10]{CK11})
\[
\xymatrix{\rH^0(N_{C'/V_5})\ar[r]\ar[d]&\Ext^1(\Omega_{C'},\cO_{C'})\cong\CC\ar[r]\ar[d]^{\cong}&\rH^1(T_{V_5}|_{C'})=0\ar[d]&\\
\rH^0(f^*N_{C'/V_5})\ar[r]&\mathrm{Ob}_f(C')\cong\CC\ar[r]&\mathrm{Ob}_f(V_5)\ar[r]&\rH^1(f^*N_{C'/V_5})=0,
}
\]
where $\rH^1(T_{V_5}|_{C'})=0$ by Lemma \ref{ob2}. By the commutativity of the middle of the above diagrams, we have $\mathrm{Ob}_f(V_5)=0$.

Let us show that $\rH^1(f^*N_{C'/V_5})=0$. Consider the exact sequence $$\ses{N_{C'/V_5}}{f^*N_{C'/V_5}\cong f_*\cO_C \otimes N_{C'/V_5}}{N_{C'/V_5}|_{L_2}(-1)}$$ obtained from the exact sequence $\ses{\cO_{C'}}{f_*\cO_C}{\cO_{L_2}(-1)}$ by tensoring the normal bundle $C'$ in $V_5$. Yet, one can easily see that $\rH^1(N_{C'/V_5})=0$ by Lemma \ref{ob2} again. In addition, $$N_{C'/V_5}|_{L_2}(-1)=\cU^*|_{L_2}(-1)= \cU|_{L_2}\cong \cO_{L_2}\oplus \cO_{L_2}(-1)$$ because of \eqref{eq3}, $\mathrm{rank}(\cU)=2$, and $c_1(\cU)=-1$. Hence, $\rH^1(N_{C'/V_5}|_{L_2}(-1))=0$, which completes the proof of our claim.
\end{proof}
\begin{proof}[Proof of Proposition \ref{mainprop2}]
For $[f]\in \cM_3(V_5)$, if the degree of the image $f(C)$ is $\deg f(C)=3$ , then $C\cong f(C)$ (after contracting the central component\footnote{i.e., an irreducible component of $C$ mapping to a point under $f$} of the domain curve). Hence, $[\cO_{f(C)}]\in \bS(V_5)$, which implies that $[f]\in \bM(V_5)$.

Let $\Gamma_0^2$ be the locus of stable map $f:C\lr V_5$ such that $\mathrm{deg}f(C)=2$. Note that if $\mathrm{deg}f(C)=2$, then $f(C)=L_1\cup L_2$ with the restriction on $f^{-1}(L_i)$ is a $i$-fold covering map of $L_i$, $i=1,2$.
Therefore $\Gamma_0^2$ is isomorphic to a $\bM(L_2, 2)=\PP_{(1,2,2)}^2$-bundle over the space $IQ\setminus (IQ\cap \Delta)$ of the ordered pair of lines (cf. Proposition \ref{blcenter2}). In special, $\Gamma_0^2$ is irreducible and $\dim\Gamma_0^2=5$. Furthermore, the moduli space $\cM(V_5)$ at each $[f]\in \Gamma_0^2$ has at most finite group quotient singularity (Lemma \ref{ob1}) and thus $\bM(V_5)\cup \Gamma_0^2$ is irreducible. That is, $\Gamma_0^2\subset \bM(V_5)$.

If $\mathrm{deg}f(C)=1$, then $f$ factors through a line in $V_5$ and thus it lies in the relative stable maps space $\cM(\cZ)$ over the Hilbert scheme $\bH_1(V_5)$ of lines in $V_5$. Thus, it is another irreducible component of $\cM(V_5,3)$ because $\dim \cM(\cZ)= \dim\bM(V_5)=6$.

Finally, we prove item (2). Let $p: \Gr(2,5)\subset \PP^9$ be the Pl\"ucker embedding. Then $V_5$ is defined as a zero locus of a section $s$ of the vector bundle $p^*\cO(1)^{\oplus 3}$ over $\Gr(2,5)$. Therefore the moduli space  $\cM(V_5,3)$ can be regraded as a zero locus of the induced section $\widetilde{s}$ of the vector bundle $\cV:=\pi_*\text{ev}^*(p^*\cO(1)^{\oplus 3})$ over the space $\cM(\Gr(2,5),3)$. Here the map $\pi: \cC\lr \cM(\Gr(2,5),3)$ is the universal family and $\text{ev}: \cC \lr \Gr(2,5)$ is the evaluation map.
Since the dimension of each irreducible component of $\cM(V_5,3)$ is the expected dimension $6$$(=\dim \cM(\Gr(2,5),3)-\mathrm{rank}\cV$),
the space $\cM(V_5,3)$ is a local complete intersection (cf. \cite[Section 2]{BK13}). Hence, the intersection part of two irreducible components of $\cM(V_5,3)$ must be purely of dimension $5$ by \cite[Theorem 3.4]{Har62}.
Let $[f:C\lr L\subset V_5]\in \bM(V_5)\cap \cM(\cZ)$ be the intersection point such that $L$ is a line. Because $[f:C\lr L]\in \bM(V_5)$, it is possible to construct one parameter family of smooth twisted cubics $f_t:\PP^1\times\CC\lr C_t\subset V_5$ such that $\text{lim}_{t\lr0}f_t=f$. If we regard this one as one parameter family of stable shaves $\cO_{C_t}$ over $t\neq0$, then its limit (as a stable sheaf) must be supported on a non-free line $L$ by Lemma \ref{typeofdeg1}. The stable reduction of the flat family of stable maps $f_t$ ensures that the image $f(\PP^1)=L$ of the limit map $f$ is non-free. Because the intersection part should be purely of dimension $5$, the only possibility is the one claimed above.
\end{proof}

\begin{remark}\label{extob}
For $[f]\in \Gamma_0^1$, let $f:C\lr L\subset V_5$ be a stable map such that $L$ is non-free. By Lemma \ref{spectralseqmap}, we have an exact sequence
\[
\Ext^2 ([f^*\Omega_L \lr \Omega_C] ,\cO_C) \lr \Ext^2
([f^*\Omega_{V_5} \lr \Omega_C] ,\cO_C)\lr \rH^1(f^*N_{L/V_5})\lr0.
\]
The first term is $ \Ext^2 ([f^*\Omega_L \lr \Omega_C] ,\cO_C)=0$ because of the convexity of $L(\cong\PP^1)$. Also $\rH^1(f^*N_{L/V_5})\cong\rH^1(f_*\cO_C\otimes N_{L/V_5})\cong\CC^2$ by the adjunction formula and Proposition \ref{linesinv1}. Hence the obstruction space of $\cM(V_5)$ at $[f]$ does not vanish. Furthermore, from the exact sequence
\[
\Ext^1(\Omega_C,\cO_C)\lr \rH^1(f^*T_{V_5})\lr  \Ext^2
([f^*\Omega_{V_5} \lr \Omega_C] ,\cO_C)\lr 0,
\]
we have $\rH^1(f^*T_{V_5})\neq 0$.
\end{remark}
\begin{question}
For all $d\geq2$, is the intersection part of $\bM(V_5,d)$ and the relative stable map space $\cM(\cZ,d)$ the relative stable map space over the locus of non-free lines in $V_5$ ?
\end{question}
\section{Comparison between $\bM(V_5,3)$ and $\bS(V_5,3)$}
In this section, we compare $\bM(V_5)$ and $\bS(V_5)$ by using explicit (weighted) blow-up maps. As a corollary, we obtain the intersection cohomology group of $\bM(V_5)$.
\subsection{The proof of Theorem \ref{mainthm}}
Theorem \ref{mainthm} is proven in the following way.
Let us fix a closed embedding $V_5 \subset \mathrm{Gr}(2,5):=G$. We check that the blow-up maps of the diagram in \eqref{blowup2} can be applied in our space $\bS(V_5)$ and thus we have a desingularized model of the stable map space $\bM(V_5)$. The first blow-up map $\Phi_5$ in \eqref{blowup2} restricted to $\bS(V_5)$ is an isomorphism by the proof of Lemma \ref{typeofdeg1}; hence, $\Phi_5^{-1}(\bS(V_5))= \bS(V_5)$.
Therefore, it is sufficient to check that the blow-centers $\bar{\Theta}_1(G)=\Phi_5\circ \Phi_4 (\Theta_1(G))$ (resp. $\bar{\Theta}_2(G)=\Phi_5(\Theta_2(G))$) in Theorem \ref{th1.3} \emph{cleanly} intersect with $\bS(V_5)$ along the locus $\bar{\Theta}_1(V_5)$ (resp. $\bar{\Theta}_2(V_5)$) (Notation \ref{notimpor}).
This implies that the blow-up map $$\Phi_4^{V_5}: \bM_4(V_5)=\mathrm{bl}_{\bar{\Theta}_1(V_5)}\bS(V_5)\lr \bS(V_5)$$ with the center $\bar{\Theta}_1(V_5)$ is nothing but the restriction map of $\Phi_4$ over $\Phi_4^{-1}(\bS(V_5))$ in \eqref{blowup2}. Note that the base change property of the weighted blows-up can be obtained from Lemma 3.1 in \cite{MM07}. Let $\bM_3(V_5)$ be the blowing-up of $\bM_4(V_5)$ along $\Theta_2(V_5)$, which is the strict transform of $\bar{\Theta}_2(V_5)$ by the first blow-up map $\Phi_4^{V_5}$. The blown-up space $\bM_3(V_5)$ is the restriction of $\bM_3(G)$ by the same reasoning as before. Eventually, we obtain a birational morphism $$\bM_3(V_5)=\mathrm{bl}_{\Theta_2(V_5)}\bM_4(V_5)\longrightarrow \bM(V_5),$$
which extends the birational map $\bS(V_5)\dashrightarrow \bM(V_5)$. The space $\bM_3(V_5)$ has at most finite group quotient singularity because it is a weighted blown-up space of a smooth variety.
\begin{proof}[Proof of Theorem \ref{mainthm}]
Checking the cleanly intersection is sufficient to prove that the normal spaces of the blow-up centers are restricted versions of that of the case $G=\mathrm{Gr}(2,5)$.
Because the space of the structured sheaves of a CM-curve is open in $\cS(G)$, the deformation theory of sheaves in Proposition \ref{defsheaves} can be applied in our setting.
Recall that $\bar{\Theta}_1(G)$ is the space of sheaves $F$ fitting into the short exact sequence:
\begin{equation}\label{stable1}
\ses{\cO_L(-1)}{F}{\cO_{L^2}},
\end{equation}
where $L^2$ is a planar double line. The space $\bar{\Theta}_1(G)$ is a $\PP(\Ext_G^1(\cO_{L^2},\cO_L(-1)))$-bundle over a $\PP(\Ext_G^1(\cO_L,\cO_L(-1)))$-bundle over $\bH_1(G)$, where the fiber $\PP(\Ext_G^1(\cO_L,\cO_L(-1))$ parameterizes the choice of the plane containing line $L$. Let us show that there exists a canonical isomorphism of normal bundles:
\[
N_{\bar{\Theta}_1(V_5)/\bS(V_5),F}\cong N_{\bar{\Theta}_1(G)/\bS(G),F}
\]
for the stable sheaf $F\in \bS(V_5)$ of the type \eqref{stable1}. From the commutativity of the diagram
\begin{equation}\label{com1}
\xymatrix{0\ar[r]&T_F\bar{\Theta}_1(V_5)\ar[r]\ar[d]^{\psi_1}&T_F\bS(V_5)\ar[r]\ar[d]^{\psi_2}&N_{\bar{\Theta}_1(V_5)/\bS(V_5),F}\ar[r]\ar[d]&0\\
0\ar[r]&T_F\bar{\Theta}_1(G)\ar[r]&T_F\bS(G)\ar[r]&N_{\bar{\Theta}_1(G)/\bS(G),F}\ar[r]&0}
\end{equation}
it suffices to show that the induced map \begin{equation}\label{eq11}\mathrm{Coker}(\psi_1)\stackrel{\cong}{\longrightarrow} \mathrm{Coker}(\psi_2)\end{equation} is an isomorphism.

We first show that there exists a (non-canonical) isomorphism
\begin{equation}\label{iso1}
\mathrm{Coker}(\psi_1)\cong\Hom(\cO_{L^2}, \cO_L(-1)\otimes N_{V_5/G})\oplus \Hom(\cO_{L^2},\cO_{L^2}\otimes N_{V_5/G}),
\end{equation}
where the normal bundle of $V_5$ in $G$ is $N_{V_5/G}=\cO_{V_5}(1)^{\oplus3}$. Note that $\bar{\Theta}_1(V_5)$ (resp. $\bar{\Theta}_1(G)$) is a $\PP(\Ext_{V_5}^1(\cO_{L^2},\cO_L(-1)))$ (resp. $\PP(\Ext_{G}^1(\cO_{L^2},\cO_L(-1))$)-bundle over the locus $\bar{\Theta}_1^2(V_5)$ (resp. $\bar{\Theta}_1^2(G)$) of double lines in $V_5$ (resp. $G$) as viewed in the Hilbert scheme of conics (cf. Proposition \ref{plane}). In this setting, the first term in \eqref{iso1} is isomorphic to the normal space of the fiber of $\bar{\Theta}_1(V_5)$ and $\bar{\Theta}_1(G)$. The second term in \eqref{iso1} is isomorphic to the normal space of base spaces $$N_{\bar{\Theta}_1^2(V_5)/\bar{\Theta}_1^2(G),F}\cong\Hom(\cO_{L^2},\cO_{L^2}\otimes N_{V_5/G}),$$which can be checked by rewriting the diagram \eqref{com1} about the spaces $\bH(V_5,2)$ and $\bH(G,2)$.

Second, there is an isomorphism
\begin{equation}\label{eq12}\mathrm{Coker}(\psi_2)\cong\Hom(F,F\otimes N_{V_5/G})\end{equation}
by identifications $T_F\bS(V_5)=\Ext_{V_5}^1(F,F)$, $T_F\bS(G)=\Ext_{G}^1(F,F)$ (with dimension $6$ and $18$, respectively), and Lemma \ref{thomas2}.
However, the latter space in \eqref{eq12} fits into the middle term of the diagram
\[
\xymatrix{0\ar[r]&\Hom(F,\cO_L(-1)\otimes N_{V_5/G})\ar[r]&\Hom(F,F\otimes N_{V_5/G})\ar[r]&\Hom(F,\cO_{L^2}\otimes N_{V_5/G})\ar[r]&0\\
&\Hom(\cO_{L^2},\cO_L(-1)\otimes N_{V_5/G})\ar^{\cong}[u]&&\Hom(\cO_{L^2},\cO_{L^2}\otimes N_{V_5/G})\ar[u]^{\cong}&
},
\]
which has its origins in \eqref{stable1}. Hence, the isomorphism in \eqref{eq11} holds.

The use of a similar computation as before enables a canonical isomorphism
\[
N_{\bar{\Theta}_2(V_5)/\bS(V_5),F}\cong N_{\bar{\Theta}_2(G)/\bS(G),F}
\]
to be obtained for the stable sheaf $F\in \bar{\Theta}_2(V_5)$ fitting into the non-split extension $\ses{\cO_{L_2}(-1)}{F}{\cO_{L_1\cup L_2}}$; thereby, we complete the proof of the claim.
\end{proof}
\subsection{Intersection cohomology of $\bM(V_5,3)$}\label{sec:5.2}
In this subsection, we compute the intersection cohomology of $\bM(V_5)$ by using Theorem \ref{mainthm}.
Let $X$ be a quasi-projective variety. For the (resp. intersection) Hodge-Deligne polynomial $\rE_c(X)(u,v)$ (resp. $\rIE_c(X)(u,v)$) for compactly supported (resp. intersection) cohomology of $X$, let
\[ \rP(X)=\rE_c(X)(-t,-t)\; (\mathrm{resp.}\;\rIP(X)=\rIE_c(X)(-t,-t))\]
be the \emph{virtual} (resp. intersection) Poincar\'e polynomial of $X$. See \cite[page 21]{Mun08} for the motivic properties of the virtual Poincar\'e polynomial. A map $\pi:X\lr Y$ is \emph{small} if for a locally closed stratification of $Y=\bigsqcup_i Y_i$ such that the restriction map $\pi|_{\pi^{-1}(Y_i)}:\pi^{-1}(Y_i)\lr Y_i$ is etale locally trivial, the inequality \[\dim \pi^{-1}(y)< \frac{1}{2}\mathrm{codim}_Y(Y_i)\] holds for each closed point $y\in Y_i$ except a dense open stratum of $Y$. Let $\pi: X\lr Y$ be a small map such that $X$ has at most finite group quotient singularities (more generally, \emph{rational homology manifold}). Then $\rP(X)=\rIP(Y)$ (\cite[Definition 6.6.1 and Theorem 6.6.3]{Max18}).
\begin{corollary}\label{corpoin}
The intersection cohomology of the stable map space $\bM(V_5)$ is given by
$$\rIP(\bM(V_5))=1+3t^2+8t^4+10t^6+8t^8+3t^{10}+t^{12}.$$
\end{corollary}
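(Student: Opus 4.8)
The plan is to reduce the computation of $\rIP(\bM(V_5))$ to the virtual Poincar\'e polynomial $\rP(\bM_3(V_5))$ of the desingularized model, and then to evaluate the latter by tracing the two (weighted) blow-ups of Theorem \ref{mainthm} through the motivic relations for $\rP$. As recalled just above, the extended morphism $\widetilde{\Psi}^{\mathrm{I}}:\bM_3(V_5)\to\bM(V_5)$ is small, and by item (3) of Theorem \ref{mainthm} the source $\bM_3(V_5)$ has at worst finite group quotient singularities, hence is rationally smooth. The small-map property stated above then gives $\rIP(\bM(V_5))=\rP(\bM_3(V_5))$, so everything reduces to computing the right-hand side.

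For $\rP(\bM_3(V_5))$ I would use the scissor (additivity) and multiplicativity relations for the virtual Poincar\'e polynomial (\cite[Theorem 2.2]{Mun08}), together with two standing facts: the virtual Poincar\'e polynomial of a weighted projective space $\PP^n_{(w_0,\dots,w_n)}$ equals that of $\PP^n$, and $\rP$ is multiplicative over the Zariski-locally trivial (weighted) projective bundles arising as exceptional divisors. The starting point is $\bS(V_5)=\mathrm{Gr}(2,5)$ (Proposition \ref{hilbsimp}), whose Poincar\'e polynomial is the Gaussian binomial
\[
\rP(\mathrm{Gr}(2,5))=\frac{(1-t^{10})(1-t^{8})}{(1-t^{2})(1-t^{4})}=1+t^2+2t^4+2t^6+2t^8+t^{10}+t^{12}.
\]
The first blow-up $\Phi_4^{V_5}$ has smooth center $\bar{\Theta}_1(V_5)\cong\PP^1$ and exceptional divisor $E_4$ a $\PP^4_{(1,2,2,3,3)}$-fibration over it; with $\rP(E_4)=(1+t^2)(1+t^2+t^4+t^6+t^8)$ the relation $\rP(\bM_4(V_5))=\rP(\bS(V_5))-\rP(\bar{\Theta}_1(V_5))+\rP(E_4)$ yields
\[
\rP(\bM_4(V_5))=1+2t^2+4t^4+4t^6+4t^8+2t^{10}+t^{12}.
\]

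For the second blow-up one must first identify the center $\Theta_2(V_5)$, the strict transform of $\bar{\Theta}_2(V_5)$, with the blow-up $\Bl_{\bar{\Theta}_1(V_5)}\bar{\Theta}_2(V_5)$, using $\bar{\Theta}_1(V_5)\subset\bar{\Theta}_2(V_5)$ and $\bar{\Theta}_2(V_5)\cong\mathrm{Fl}(1,2;\CC^3)$ from Proposition \ref{blcenter2}, whose Poincar\'e polynomial is $(1+t^2)(1+t^2+t^4)$. Since $\bar{\Theta}_1(V_5)$ is a smooth curve of codimension two in the smooth $3$-fold $\bar{\Theta}_2(V_5)$, the exceptional divisor of $\Bl_{\bar{\Theta}_1}\bar{\Theta}_2$ is a $\PP^1$-bundle over $\PP^1$, and the same additivity computation gives $\rP(\Theta_2(V_5))=1+3t^2+3t^4+t^6$. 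The exceptional divisor $E_3$ of the second blow-up is a $\PP^2_{(1,2,2)}$-fibration over $\Theta_2(V_5)$, so $\rP(E_3)=(1+3t^2+3t^4+t^6)(1+t^2+t^4)=1+4t^2+7t^4+7t^6+4t^8+t^{10}$, and a final application of $\rP(\bM_3)=\rP(\bM_4)-\rP(\Theta_2)+\rP(E_3)$ produces
\[
\rP(\bM_3(V_5))=1+3t^2+8t^4+10t^6+8t^8+3t^{10}+t^{12},
\]
which is the asserted polynomial.

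The step I expect to require the most care is the justification of multiplicativity of $\rP$ along the weighted projective bundles appearing as exceptional divisors — that is, that these fibrations are Zariski-locally trivial with weighted-projective-space fibers whose virtual Poincar\'e polynomial agrees with the ordinary one — together with the identification of the strict transform $\Theta_2(V_5)$ with $\Bl_{\bar{\Theta}_1}\bar{\Theta}_2$ through the \emph{weighted} blow-up $\Phi_4^{V_5}$. I would note that the final computation is in fact robust to the weighting: regardless of whether the relevant blow-ups are ordinary or weighted, the exceptional fibers contribute $\rP(\PP^n)$ and the fibration over $\PP^1$ contributes $(1+t^2)^2$, so the polynomial identities above are unaffected.
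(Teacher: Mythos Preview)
Your computation of $\rP(\bM_3(V_5))$ via the two blow-up relations is correct and matches the paper's approach exactly; the paper organizes the same additivity/multiplicativity relations as
\[
\rP(\bM_4)=\rP(\bS)+(\rP(\PP^4)-1)\cdot\rP(\bar{\Theta}_1),\qquad
\rP(\bM_3)=\rP(\bM_4)+(\rP(\PP^2)-1)\cdot\bigl[\rP(\mathrm{Fl})+(\rP(\PP^1)-1)\cdot\rP(\PP^1)\bigr],
\]
which is algebraically identical to your excision form $\rP(\tilde X)=\rP(X)-\rP(\mathrm{center})+\rP(E)$. Your identification of the strict transform $\Theta_2(V_5)$ with $\Bl_{\bar{\Theta}_1}\bar{\Theta}_2$ also agrees with what the paper implicitly uses, and your caution about weighted fibers is well placed but, as you note, numerically harmless.

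The one genuine gap is your appeal to smallness of $\widetilde{\Psi}^{\mathrm{I}}$ ``as recalled just above.'' In the paper this is not established prior to the corollary; it is proved \emph{inside} the corollary's proof. The argument is not automatic: one must analyze what $\widetilde{\Psi}^{\mathrm{I}}$ contracts. Concretely, after the second blow-up the exceptional divisor of $\Phi_4^{V_5}$ becomes a $\Bl_{\PP^1}\PP^4_{(1,2,2,3,3)}$-bundle over $\bar{\Theta}_1(V_5)\cong\PP^1$, and the paper shows (using \cite[Section 4.4]{CK11}) that under $\widetilde{\Psi}^{\mathrm{I}}$ it is precisely this inner $\PP^1$ (the projectivized normal direction of $IQ\cap\Delta$ in $IQ$) that is contracted. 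The resulting positive-dimensional fibers have dimension $1$ and sit over a locus of codimension $6-(2+1)=3$, so $1<\tfrac{3}{2}$ and smallness follows. Without this fiber analysis your reduction $\rIP(\bM(V_5))=\rP(\bM_3(V_5))$ is unjustified, so you should supply it rather than cite it.
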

\begin{proof}
Because our blow-ups are weighted, the singularity in $\bM_3(V_5)$ is at most of the finite group quotient type.
On the other hand, the birational morphism $\widetilde{\Psi}^{\mathrm{I}}:\bM_3(V_5)\lr \bM(V_5)$ in Theorem \ref{mainthm} can be described by the result of \cite[Section 4.3 and Section 4.4]{CK11}. Recall that $IQ\cap \Delta=\bar{\Theta}_1(V_5)\subset\bar{\Theta}_2(V_5)$, where the latter space is isomorphic to $\bar{\Theta}_2(V_5)=IQ\cong \mathrm{Fl}(1,2;\CC^3)$ (Proposition \ref{blcenter2}). The exceptional locus $\bar{\Theta}_2^3(V_5)$ of the second blow up $\bM_3(V_5)\lr\bM_4(V_5)$ in Theorem \ref{mainthm} is a $\PP_{(1,2,2)}^2$-fibration over $\text{bl}_{\bar{\Theta}_1(V_5)}\bar{\Theta}_2(V_5)$. Also the exceptional divisor of the first blow-up map $\bM_4(V_5)\lr\bS(V_5)$ becomes a $\mathrm{bl}_{\PP^1}\PP_{(1,2,2,3,3)}^4$-fibration over $\bar{\Theta}_1(V_5)$.
Let us denote it by $\bar{\Theta}_1^3(V_5)$. Note that the center $\PP^1$ of the blow up map $\mathrm{bl}_{\PP^1}\PP_{(1,2,2,3,3)}^4\lr \PP_{(1,2,2,3,3)}^4$ is the projectivization of the normal space of the diagonal $IQ\cap \Delta$ in $IQ$. Hence the intersection part $\bar{\Theta}_2^3(V_5)\cap \bar{\Theta}_1^3(V_5)$ is a $\PP_{(1,2,2)}^2$-fibration over a $\PP^1$-fibration over $\bar{\Theta}_1(V_5)$. 
On the other hand, in $\bM(V_5)$, one can easily see that $\Gamma_0^1$ is isomorphic to a $\bM(\PP^1, 3)$-fibration over $\bar{\Theta}_1(V_5)$ and the closure $\bar{\Gamma}_{0}^{2}$ of $\Gamma_{0}^{2}$ is isomorphic to $\bM(\PP^1, 2)\cong \PP_{(1,2,2)}^2$-fibration over $\bar{\Theta}_2(V_5)$. Also, the intersection part $\Gamma_0^1\cap \bar{\Gamma}_{0}^{2}$ is isomorphic to a $\PP_{(1,2,2)}^2$-fibration over $\bar{\Theta}_1(V_5)$.
Since $\widetilde{\Psi}^{\mathrm{I}}(\bar{\Theta}_2^3(V_5)\cap \bar{\Theta}_1^3(V_5))=\Gamma_0^1\cap \bar{\Gamma}_{0}^{2}$ and $\widetilde{\Psi}$ is injective in the complement $\bM_3(V_5)\setminus \bar{\Theta}_2^3(V_5)\cap \bar{\Theta}_1^3(V_5)$ by its construction, the map $\widetilde{\Psi}^{\mathrm{I}}$ is a small one because
\[
\dim (\widetilde{\Psi}^{\mathrm{I}})^{-1} (y)=1<\frac{1}{2}\cdot(6-(2+1))=\frac{1}{2}(\dim \bM(V_5)-(\dim\PP_{(1,2,2)}^2+\dim (\bar{\Theta}_1(V_5)))),
\]
for each point $y\in \Gamma_0^1\cap \bar{\Gamma}_{0}^{2}$.
Therefore, $\rP(\bM_3(V_5))=\rIP(\bM(V_5))$ and
\[\begin{split}
\rP(\bM_4(V_5))&=\rP(\bS(V_5))+ (\rP(\PP_{(1,2,2,3,3)}^4)-1)\cdot \rP(\bar{\Theta}_1(V_5));\\
\rP(\bM_3(V_5))&=\rP(\bM_4(V_5))+(\rP(\PP_{(1,2,2)}^2)-1)\cdot[\rP( \mathrm{Fl}(1,2;\CC^3))+(\rP(\PP^1)-1)\cdot \rP(\PP^1)].
\end{split}\]
However, $\rP(\bS(V_5))=\rP(\mathrm{Gr}(2,5))=\frac{(1-t^{8})(1-t^{10})}{(1-t^2)(1-t^4)}$, $\rP(\PP^r)=\frac{1-t^{2r+2}}{1-t^2}$ and thus we obtain the result.
\end{proof}
\subsection*{Acknowledgements}
The author gratefully acknowledges the many helpful suggestions of In-Kyun Kim and SangHyeon Lee during the preparation of the paper. The author would like to thank the anonymous referee for valuable comments and suggestions to improve the quality of the paper.
Data sharing not applicable to this article as no datasets were generated or analysed during the current study.


\bibliographystyle{alpha}
\newcommand{\etalchar}[1]{$^{#1}$}

\end{document}